\documentclass[11pt]{amsart}
\usepackage{amsfonts,amssymb,amsthm}
\usepackage[numbers,sort&compress]{natbib}
\usepackage[leqno]{amsmath}
\usepackage[mathscr]{euscript}
\usepackage{graphicx} % Allows including images
\usepackage{booktabs}
\usepackage{color}
\usepackage{mathrsfs}
\usepackage{paralist}
\usepackage{hyperref}
\usepackage{framed}
\usepackage{epstopdf}
\usepackage{algorithm}
\usepackage{algorithmic}
\usepackage{caption}
\setlength{\oddsidemargin}{0in} \setlength{\evensidemargin}{0in}
 \setlength{\textwidth}{6.5in}
\setlength{\topmargin}{0.in} \setlength{\textheight}{8.75in} \setlength{\footskip}{.5in}
\setlength{\headheight}{.0in}

\newtheorem {theorem}{Theorem}[section]
\newtheorem {proposition}{Proposition}[section]

\newtheorem {lemma}{Lemma}[section]
\newtheorem {example}{Example}[section]
\newtheorem {definition}{Definition}[section]
\newtheorem {remark}{Remark}[section]
\newtheorem {assumption}{Assumption}[section]

\newcommand{\R}{{\mathbb R}}
\newcommand{\N}{\mathbb{N}}

\newcommand{\argmin}{\operatornamewithlimits{argmin}}

\makeatletter
\newcommand{\leqnomode}{\tagsleft@true}
\newcommand{\reqnomode}{\tagsleft@false}
\makeatother

\def\ees{{\accent"5E e}\kern-.385em\raise.2ex\hbox{\char'23}\kern-.08em}
\def\EES{{\accent"5E E}\kern-.5em\raise.8ex\hbox{\char'23 }}
\def\ow{o\kern-.42em\raise.82ex\hbox{
   \vrule width .12em height .0ex depth .075ex \kern-0.16em \char'56}\kern-.07em}
\def\OW{O\kern-.460em\raise1.36ex\hbox{
\vrule width .13em height .0ex depth .075ex \kern-0.16em \char'56}\kern-.07em}

\pagestyle{plain}

\title{A parameter-free approach for solving SOS-convex semi-algebraic fractional programs}

\author{Chengmiao Yang}
\address[Chengmiao Yang]{Academy for Advanced Interdisciplinary Studies, Northeast Normal University, Changchun 130024, Jilin Province, China}
\email{cmyang@nenu.edu.cn}

\author{Liguo Jiao$^{\dag}$}
\address[Liguo Jiao]{Academy for Advanced Interdisciplinary Studies, Northeast Normal University, Changchun 130024, Jilin Province, China; Shanghai Zhangjiang Institute of Mathematics, Shanghai 201203, China}
\email{jiaolg356@nenu.edu.cn; hanchezi@163.com}

\author{Jae Hyoung Lee$^{\dag}$}
\address[Jae Hyoung Lee]{Department of Applied Mathematics, Pukyong National University, Busan, 48513, Korea}
\email{mc7558@naver.com}

\thanks{$^{\dag}$Corresponding Authors}

\date{\today}

\begin{document}

\begin{abstract}
In this paper, we study a class of nonsmooth fractional programs {\rm (FP, for short)} with SOS-convex semi-algebraic functions.
Under suitable assumptions, we derive a strong duality result between the problem (FP) and its semidefinite programming (SDP) relaxations.
Remarkably, we extract an optimal solution of the problem (FP) by solving one and only one associated SDP problem.
Numerical examples are also given.
\end{abstract}
\subjclass[2020]{90C32; 90C22; 90C23}
\keywords{Fractional programs; parameter-free approach; semidefinite programming; SOS-convex semi-algebraic function.}
\maketitle

\section{Introduction}\label{Sec:1}
Consider the following standard fractional programming problem,
\begin{align}\label{FPP}
\overline \gamma := \min\limits_{x\in S} \ \frac{f(x)}{g(x)},
\end{align}
where $S$ is a nonempty subset of $\mathbb{R}^n,$ the function $f$ is non-negative and the function
$g$ is positive over $S.$
A remarkable feature of the fractional programming problem~\eqref{FPP} is that its objective function is generally not convex, even under rather restrictive convexity/concavity assumptions.
It is also worth mentioning that a great of ground-breaking results as well as applications on fractional programs were contributed by Schaible and his collaborators; see, e.g., \cite{Barros1996,Cambini1993,Schaible1981,Schaible2003} and the references therein.
Actually, the ratio to be optimized often describes some kind of efficiency of a system like economics, management science and other extremum problems; see, e.g., \cite{Stancu-Minasian1997,Bajalinov2003}.
In particular, a fractional program also occurs sometimes indirectly in modeling where initially no ratio is involved, see \cite{Schaible1995} for a comprehensive survey.

As is well-known, one of the most famous methods for solving the problem~\eqref{FPP} is the parameter approach due to Dinkelbach (see, e.g., \cite{Dinkelbach1967}),
which relates it to the following {\it non-fractional} problem,
\begin{align}\label{NFPP}
\min\limits_{x\in S} \ \left\{f(x) - \overline \gamma g(x)\right\}.
\end{align}
The Dinkelbach lemma states that ``the problem~\eqref{FPP} has an optimal solution $\overline x \in S,$
if and only if $\overline x$ is an optimal solution of the problem~\eqref{NFPP} and its optimal value is equal to zero with $\overline \gamma = \frac{f(\overline x)}{g(\overline x)}.$"
This tells us that finding an optimal solution to the problem~\eqref{FPP} can be achieved
by solving the problem~\eqref{NFPP}, a {\it non-fractional} one.
However, one drawback of the Dinkelbach procedure is that this can be proceeded only when the optimal value $\overline \gamma$ of the problem~\eqref{FPP} is known in advance.
Nevertheless, if we take some special structures of the functions $f$ and $g$ into account, then the optimal value $\overline \gamma$ can be obtained, and by using the Dinkelbach procedure, the problem~\eqref{FPP} can be solved; see, e.g., \cite{Guo2021,Guo2024,Lee2018,Jiao2019b}.
Particularly, when the functions $f$ and $g$ are SOS-convex polynomials (see Definition~\ref{SOS-convexity}), one can obtain the optimal value $\overline \gamma$ of the problem~\eqref{FPP} by solving a related SDP problem of the problem~\eqref{FPP}, and then putting the obtained $\overline \gamma$ into the Dinkelbach procedure, one can get the optimal solution of the problem~\eqref{FPP} by solving another related SDP problem of the problem~\eqref{NFPP}; see, e.g., \cite{Lee2018}.
Recently, along with the Dinkelbach method, Jiao and Lee \cite{Jiao2019b} studied a class of nonsmooth fractional programs with the functions $f$ and $g$ consisting of sums of an SOS-convex polynomial function and a support function, and they solved this kind of problems by solving also two related SDP problems, one is for getting the optimal value $\overline \gamma,$ another is for obtaining the optimal solution $\overline x.$
Some more recent important works on fractional programs are contributed by Bot et al. \cite{Bot2021,Bot2023}.

In this paper, we aim at solving a new class of nonsmooth fractional programs with SOS-convex semi-algebraic functions (see Definition~\ref{SOS-SA}).
Such a class of nonsmooth convex functions, which extends the notion of SOS-convex polynomials, was introduced by Chieu et al. \cite{Chieu2018}.
It is worth noting that, under commonly used strict feasibility conditions, the optimal value and optimal solutions of SOS-convex semi-algebraic programs can be found by solving a single SDP problem.
Among other things, we make the following contributions to fractional programs.
\begin{itemize}
\item[{\rm (i)}] The model in consideration is a rather rich framework that includes at least quadratic fractional problems \cite{Nguyen2016}, SOS-convex polynomial fractional problems~\cite[Section 4]{Jeyakumar2012} and \cite{Lee2018}, and fractional problems with sums of an SOS-convex polynomial and a support function \cite{Jiao2019b} as special cases.
\item[{\rm (ii)}] Different to the Dinkelbach method, we propose a parameter-free approach for solving the class of nonsmooth fractional programs with SOS-convex semi-algebraic functions, and this parameter-free approach allows us to extract an optimal solution (and the optimal value) by solving one and only one associated SDP problem.
\end{itemize}

The outline of the paper is organized as follows. Section~\ref{sect:2} presents some notations and preliminaries. Section~\ref{sect:3} gives the main results of the paper on finding an optimal solution of the fractional problems in consideration.  Finally, conclusions are given in Sect.~\ref{sect:4}.

\section{Notation and preliminaries}\label{sect:2}

Throughout the paper, the space $\mathbb{R}^{n}$ is equipped with the standard Euclidean norm, i.e., $\| x \|=\sqrt{x_1^2+\cdots+x_n^2}$ for all $x := (x_{1}, x_{2}, \ldots, x_{n}) \in \R^n.$
We suppose $1 \leq n \in \mathbb{N}$, where $\mathbb{N}$ stands for the set of non-negative integers.
The non-negative orthant of $\mathbb{R}^{n}$ is denoted by
$$\mathbb{R}_{+}^{n}:=\left\{x \in \R^n \colon x_i \geq 0, \ i = 1, \ldots, n\right\}.$$

\subsection{Convex analysis}
For an extended real-valued function $f$ on $\mathbb{R}^{n},$ $f$ is said to be {\it proper} if for all $x\in \mathbb{R}^{n},$ $f(x)>-\infty$ and there exists $\hat{x}\in \mathbb{R}^{n}$ such that $f(\hat{x})\in\mathbb{R}.$
We denote its domain and epigraph of $f$ by ${\rm dom\,} f:=\{x\in \mathbb{R}^{n}: f(x)<+\infty\}$ and ${\rm epi\,}f:=\{(x,r)\in \mathbb{R}^{n}\times\mathbb{R}: f(x)\leq r\},$ respectively.
We say a function $f$ is lower semicontinuous (l.s.c.) if $\liminf_{y\to x}f(y)\geq f(x)$ for all $x\in\mathbb{R}^{n} ;$ in other words, ${\rm epi\,}f$ is closed in $\mathbb{R}^{n+1};$ see, e.g., \cite[Theorem 7.1]{Rockafellar1970}.
A function $f\colon\mathbb{R}^{n}\to\mathbb{R}\cup\{+\infty\}$ is said to be {\it convex} if ${\rm epi\,}f$ is a convex set.
As usual, for any proper convex function $f$ on $\mathbb{R}^{n},$
its conjugate function $f^*\colon\mathbb{R}^{n} \to \mathbb{R}\cup \{+\infty\}$  is defined by $f^*(x^*)=\sup \left\{\langle x^*,x\rangle-f(x) : x\in\mathbb{R}^{n}\right\}$ for all $x^*\in\mathbb{R}^{n}.$

For a given set $A\subset \mathbb{R}^{n},$ we denote the closure and the convex hull generated by $A$ by ${\rm cl\,}A$ and ${\rm conv\,}A,$ respectively.
The indicator function $\delta_A$ is defined by
\begin{equation*}
\delta_A(x):=\left\{
\begin{array}{@{\,}ll}
0, & x\in A,\\
+\infty, & {\rm otherwise}.
\end{array}
\right.
\end{equation*}
Note that, if $A$ is a convex set, then the indicator function $\delta_A$ is convex; if $A$ is closed, then $\delta_A$ is l.s.c..
\begin{lemma}{\rm\cite{Jeyakumar2006}}\label{lemma1}
Let $f\colon\mathbb{R}^{n} \to \mathbb{R}\cup \{+\infty\}$ and $g\colon \mathbb{R}^{n} \to \mathbb{R}\cup \{+\infty\}$ be proper l.s.c. convex functions. If ${\rm dom\,}f\cap {\rm dom\,}g\neq\emptyset,$ then
\begin{equation*}
{\rm epi}\left(f+g\right)^*={\rm cl}\left({\rm epi\,}f^* + {\rm epi\,} g^*\right).
\end{equation*}
Moreover$,$ if one of the functions $f$ and $g$ is continuous$,$ then
\begin{equation*}
{\rm epi}\left(f+g\right)^*={\rm epi\,}f^* + {\rm epi\,}g^*.
\end{equation*}
\end{lemma}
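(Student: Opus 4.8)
The plan is to derive both identities from conjugate/infimal-convolution calculus together with the Fenchel--Moreau theorem, proving the closed version first and then removing the closure under the continuity hypothesis.

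First I would dispatch the easy inclusion ${\rm epi\,}f^* + {\rm epi\,}g^* \subseteq {\rm epi\,}(f+g)^*$. For $(u,\alpha)\in{\rm epi\,}f^*$ and $(v,\beta)\in{\rm epi\,}g^*$, subadditivity of the supremum gives $(f+g)^*(u+v)=\sup_x\{\langle u+v,x\rangle - f(x)-g(x)\}\le f^*(u)+g^*(v)\le\alpha+\beta$, so $(u+v,\alpha+\beta)\in{\rm epi\,}(f+g)^*$. Because ${\rm dom\,}f\cap{\rm dom\,}g\neq\emptyset$, the sum $f+g$ is proper, l.s.c. and convex, hence $(f+g)^*$ is proper l.s.c. convex and ${\rm epi\,}(f+g)^*$ is closed; taking closures yields ${\rm cl}({\rm epi\,}f^*+{\rm epi\,}g^*)\subseteq{\rm epi\,}(f+g)^*$.

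The reverse inclusion carries the real content. I would introduce the infimal convolution $\phi:=f^*\,\square\,g^*$, with $\phi(w):=\inf_u\{f^*(u)+g^*(w-u)\}$, which is convex. A change of variables in the definition of the conjugate shows $(f^*\square g^*)^* = f^{**}+g^{**}$; since $f,g$ are proper l.s.c. convex, Fenchel--Moreau gives $f^{**}=f$ and $g^{**}=g$, so $\phi^*=f+g$. Conjugating once more and using that $f+g$ is proper l.s.c. convex, I obtain $(f+g)^*=\phi^{**}={\rm cl}\,\phi$, the last equality holding because $\phi$ is convex. Passing to epigraphs via ${\rm epi}({\rm cl}\,\phi)={\rm cl}({\rm epi\,}\phi)$, together with the elementary relations ${\rm epi\,}f^*+{\rm epi\,}g^*\subseteq{\rm epi\,}\phi$ and ${\rm epi}_s\,\phi\subseteq{\rm epi\,}f^*+{\rm epi\,}g^*$ (the latter for the strict epigraph, obtained by distributing the slack in $\phi(w)<\gamma$), the closures of all three sets coincide, giving ${\rm epi\,}(f+g)^*={\rm cl}({\rm epi\,}f^*+{\rm epi\,}g^*)$.

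For the \emph{moreover} part I would show that continuity of one function makes the closure redundant, i.e. that ${\rm epi\,}f^*+{\rm epi\,}g^*$ is already closed. If $f$ is continuous, a common point of ${\rm dom\,}f\cap{\rm dom\,}g$ lies in ${\rm int}({\rm dom\,}f)$, which is the classical Moreau--Rockafellar constraint qualification; under it the infimal convolution $\phi$ is exact (its defining infimum is attained) and l.s.c., so ${\rm epi\,}\phi$ is closed and equals ${\rm epi\,}f^*+{\rm epi\,}g^*$. Combined with ${\rm epi\,}(f+g)^*={\rm epi}({\rm cl}\,\phi)={\rm epi\,}\phi$ from the first part, this removes the closure. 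I expect the main obstacle to be precisely this closedness/attainment step --- verifying that continuity forces exactness of the infimal convolution and closedness of the summed epigraphs --- together with the bookkeeping of properness of $\phi$ needed to invoke Fenchel--Moreau along the chain $\phi^*=f+g$, $(f+g)^*=\phi^{**}={\rm cl}\,\phi$.
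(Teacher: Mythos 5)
Your proof is correct in substance, but note first that the paper offers no proof of this lemma at all: it is quoted directly from \cite{Jeyakumar2006}, so there is no internal argument to compare yours against. Judged on its own, your route is the standard one and it works: the easy inclusion ${\rm epi\,}f^*+{\rm epi\,}g^*\subseteq{\rm epi\,}(f+g)^*$ plus closedness of ${\rm epi\,}(f+g)^*$ gives one direction, and the chain $(f^*\,\square\,g^*)^*=f^{**}+g^{**}=f+g$, Fenchel--Moreau, and the sandwich ${\rm epi}_s\,\phi\subseteq{\rm epi\,}f^*+{\rm epi\,}g^*\subseteq{\rm epi\,}\phi$ (with all three sets having the same closure) give the other.

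Two points you should tighten. First, the ``bookkeeping of properness'' you flag is genuinely needed for $\phi^{**}={\rm cl}\,\phi$: a convex function taking the value $-\infty$ somewhere has biconjugate identically $-\infty$ on its domain, not its closure. The fix is short and should be recorded: since $\phi^*=f+g$ is proper (this is precisely where ${\rm dom\,}f\cap{\rm dom\,}g\neq\emptyset$ enters), $\phi$ can never take the value $-\infty$, and $\phi\not\equiv+\infty$ because $f^*$ and $g^*$ are proper; hence $\phi$ is proper convex and $\phi^{**}={\rm cl}\,\phi$ is legitimate. Second, in the ``moreover'' part essentially all of the content is carried by the theorem you cite as classical (Moreau--Rockafellar/Fenchel duality with attainment under the interiority condition): pointwise exactness of $\phi=(f+g)^*$, together with finiteness of $f^*(u)$ and $g^*(w-u)$ at a minimizer $u$, is exactly what lets you split any point of ${\rm epi\,}\phi$ as a sum of a point of ${\rm epi\,}f^*$ and a point of ${\rm epi\,}g^*$. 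That reduction is legitimate---the attainment theorem has an independent separation-based proof---but be aware it is of the same depth as the identity being proved, so your argument is a reduction to a known theorem rather than an elementary derivation. With these two points made explicit, the proof is complete.
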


\begin{lemma}{\rm\cite{Jeyakumar2003,Li2008}}\label{lemma2}
Let $g_{i}\colon  \mathbb{R}^{n} \to \mathbb{R}\cup \{ + \infty \},$ $i\in I,$ be a proper l.s.c. convex function$,$ where $I$ is an arbitrary index set.
Suppose that there exists $x_{0}\in\mathbb{R}^{n}$ such that $\sup_{i\in I}g_{i}(x_{0})<+\infty.$ Then
\begin{equation*}
{\rm epi}\left(\sup_{i\in I}g_{i}\right)^*={\rm cl}\left({\rm conv\,}\bigcup_{i\in I}{\rm epi\,}g_{i}^*\right).
\end{equation*}
\end{lemma}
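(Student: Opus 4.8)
The plan is to avoid computing $\left(\sup_{i\in I}g_i\right)^*$ directly and instead prove the identity by a biconjugation argument: I read the right-hand side as the epigraph of a concrete function and show that this function is exactly $\left(\sup_{i\in I}g_i\right)^*$. Write $h:=\sup_{i\in I}g_i$. Since each $g_i$ is proper, l.s.c.\ and convex and since $h(x_0)=\sup_{i\in I}g_i(x_0)<+\infty$ by hypothesis, $h$ is again proper, l.s.c.\ and convex; consequently $h^*$ is proper, l.s.c.\ and convex as well. Each conjugate $g_i^*$ is likewise proper, l.s.c.\ and convex, so every $\mathrm{epi}\,g_i^*$ is a nonempty set.

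First I would check that $E:=\mathrm{cl}\left(\mathrm{conv}\,\bigcup_{i\in I}\mathrm{epi}\,g_i^*\right)$ is the epigraph of a function. Each $\mathrm{epi}\,g_i^*$ is closed under adding nonnegative reals to the last coordinate, and this ``upward-closedness'' is preserved by unions, by the convex-hull operation, and by taking closures. Hence $E$ is a nonempty, closed, convex, upward-closed subset of $\mathbb{R}^{n}\times\mathbb{R}$, and therefore $E=\mathrm{epi}\,\varphi$ for the l.s.c.\ convex function $\varphi(x):=\inf\{r:(x,r)\in E\}$, the infimum being attained whenever finite because $E$ is closed. Next I would establish properness of $\varphi$ together with one inclusion: because $h\geq g_i$ for every $i$, conjugation reverses the order and gives $h^*\leq g_i^*$, whence $\mathrm{epi}\,g_i^*\subseteq\mathrm{epi}\,h^*$ for each $i$. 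As $\mathrm{epi}\,h^*$ is closed and convex, it contains $E$, so $E\subseteq\mathrm{epi}\,h^*$, i.e.\ $\varphi\geq h^*$. Since $h^*$ is proper it never takes the value $-\infty$, and thus $\varphi>-\infty$ everywhere; combined with $E\neq\emptyset$ this shows $\varphi$ is proper.

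The crux, and the step I expect to be most delicate, is to compute $\varphi^*$ and recognize it as $h$. Here I would use that $\varphi^*$ depends only on $\mathrm{epi}\,\varphi=E$ through the affine functional $(x,r)\mapsto\langle y,x\rangle-r$: since this functional is continuous and affine, its supremum over $E=\mathrm{cl}\,\mathrm{conv}\bigcup_{i}\mathrm{epi}\,g_i^*$ coincides with its supremum over $\bigcup_{i}\mathrm{epi}\,g_i^*$ (neither passing to convex hulls nor to closures can change the supremum of a continuous affine map). This yields $\varphi^*(y)=\sup_{i\in I}(g_i^*)^*(y)=\sup_{i\in I}g_i^{**}(y)$, and by the Fenchel--Moreau theorem $g_i^{**}=g_i$ for each $i$, so $\varphi^*=\sup_{i\in I}g_i=h$.

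Finally, applying the Fenchel--Moreau theorem to the proper l.s.c.\ convex function $\varphi$ gives $\varphi=\varphi^{**}=h^*$, that is, $E=\mathrm{epi}\,h^*=\mathrm{epi}\left(\sup_{i\in I}g_i\right)^*$, which is the asserted equality. I would also note that the hypothesis $\sup_{i\in I}g_i(x_0)<+\infty$ enters precisely to keep $h$ (hence $h^*$ and $\varphi$) proper; without it one would have $h\equiv+\infty$, $h^*\equiv-\infty$, and the identity would fail, so this is not a technical convenience but an essential ingredient.
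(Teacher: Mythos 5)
Your proof is correct. Note that the paper itself does not prove this lemma at all; it is quoted directly from the cited references (Jeyakumar--Lee--Dinh and Li--Ng--Pong), so there is no in-paper argument to compare against. Your biconjugation route is essentially the standard one behind those references, here phrased geometrically: you verify that $E={\rm cl}\,({\rm conv}\,\bigcup_i {\rm epi}\,g_i^*)$ is a closed, convex, upward-closed set, hence the epigraph of an l.s.c.\ convex $\varphi$; properness of $\varphi$ follows from the easy inclusion $E\subseteq{\rm epi}\,h^*$; the supremum of the affine functional $(x,r)\mapsto\langle y,x\rangle-r$ is unchanged by taking convex hulls and closures, which gives $\varphi^*=\sup_i g_i^{**}=\sup_i g_i=h$ by Fenchel--Moreau; and a second application of Fenchel--Moreau to the proper l.s.c.\ convex $\varphi$ yields $\varphi=h^*$, i.e.\ $E={\rm epi}\,(\sup_i g_i)^*$. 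All the delicate points are handled: the attainment of the infimum defining $\varphi$ (closedness of $E$), the exclusion of the value $-\infty$ (via $\varphi\geq h^*$ with $h^*$ proper), and the role of the hypothesis $\sup_i g_i(x_0)<+\infty$, which you correctly identify as exactly what keeps $h$, and hence $h^*$ and $\varphi$, proper.
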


\subsection{Real polynomials}
The space of all real polynomials in the variable $x$ is denoted by $\mathbb{R}[x];$
moreover, the space of all real polynomials in the variable $x$ with degree at most $d$ is denoted by $\mathbb{R}[x]_{d}.$
The degree of a polynomial $f$ is denoted by $\deg f.$
We say that a real polynomial $f$ is sum of squares, if there exist real polynomials $q_{\ell},$ $\ell=1,\ldots,s,$ such that $f =\sum_{\ell=1}^{s}q_{\ell}^{2}.$
The set consisting of all sum of squares of real polynomials with degree at most $d$ is denoted by $\Sigma[x]_{d}^{2}.$
For a multi-index $\alpha\in\mathbb{N}^{n},$ let $|\alpha|:=\sum_{i=1}^n\alpha_{i},$ and let $\mathbb{N}^n_{d}:= \{\alpha \in \mathbb{N}^n :  |\alpha|\leq d\}.$
$x^\alpha$ denotes the monomial $x_{1}^{\alpha_{1}}\cdots x_{n}^{\alpha_n}.$
The canonical basis of $\mathbb{R}[x]_{d}$ is denoted by
\begin{equation}\label{cano_basis}
v_{d}(x):=(x^\alpha)_{\alpha\in\mathbb{N}^n_{d}}=(1,x_{1},\ldots,x_{n},x_{1}^{2},x_{1}x_{2},\ldots,x_{n}^{2},\ldots,x_{1}^{d},\ldots,x_{n}^{d})^{T},
\end{equation}
which has dimension $s(d):=\left( \substack{ n+d \\ n }\right).$
Given an $s(2d)$-vector $y:= (y_{\alpha})_{\alpha\in\mathbb{N}^{n}_{2d}}$,% with $y_{0}=1,$
let ${\rm M}_{d}(y)$ be the moment matrix of dimension $s(d),$ with rows and columns labeled by \eqref{cano_basis}.
For example, for $n=2$ and $d=1,$
\begin{equation*}
y=(y_{\alpha})_{\alpha\in\mathbb{N}^{2}_{2}}=(y_{00},y_{10},y_{01},y_{20},y_{11},y_{02})^T \ \textrm { and } \ {\rm M}_{1}(y)=\left(
         \begin{array}{ccc}
           y_{00} & y_{10} & y_{01} \\
           y_{10} & y_{20} & y_{11} \\
           y_{01} & y_{11} & y_{02} \\
         \end{array}
       \right).
\end{equation*}
In what follows, for convenience, we denote ${\rm M}_{d}(y):=\sum_{\alpha\in\mathbb{N}^{n}_{2d}}y_{\alpha} \mathcal{B}_{\alpha},$
where for each $\alpha\in\mathbb{N}^{n}_{2d},$ $\mathcal{B}_{\alpha}$ is a symmetric matrix of order $s(d),$ which is understood from the definition of ${\rm M}_{d}(y).$

Let $S^{n}$ be the set of $n\times n$ symmetric matrices.
The notion $\succeq$ denotes the L\"{o}wner partial order of $S^{n},$ that is, for $X,Y\in S^{n},$ $X\succeq Y$ if and only if $X-Y$ is a positive semidefinite matrix.
Let $S_{+}^{n}$ be the set of $n\times n$ symmetric positive semidefinite matrices.
Also, $X\succ0$ means that $X$ is a positive definite matrix.
For $X, Y\in S^{n},$ $\langle X, Y\rangle:={\rm tr}(XY),$ where ``${\rm tr}$'' denotes the trace of a matrix.
The gradient and the Hessian of a polynomial $f\in \mathbb{R}[x]$ at a point $\overline x$ are denoted by $\nabla f(\overline x)$ and $\nabla^{2}f(\overline x),$ respectively.

The following lemma, which plays a key role for our main result in the paper, shows a useful existence result for optimal solutions of convex polynomial programs.
\begin{lemma}\label{lemma3} {\rm \cite{Belousov2002}}
Let $f, g_{1},\ldots, g_{m}$ be convex polynomials on $\mathbb{R}^{n}.$
Let $K := \{x \in \mathbb{R}^{n} : g_{i}(x) \leq0, \ i = 1,\ldots,m\}\neq\emptyset.$
Suppose that $\inf_{x\in K}f(x) >-\infty.$
Then$,$ $\argmin_{x\in K}f(x)\neq\emptyset.$
\end{lemma}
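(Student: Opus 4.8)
The plan is to produce a minimizing sequence and control its behaviour at infinity, which is the only way attainment can fail. Write $\gamma:=\inf_{x\in K}f(x)$, finite by hypothesis, and choose $(x_k)\subset K$ with $f(x_k)\to\gamma$. If some minimizing sequence has a bounded subsequence, then since each $g_i$ is continuous the set $K$ is closed, a cluster point $\bar x\in K$ exists, and continuity of $f$ gives $f(\bar x)=\gamma$, so $\bar x\in\argmin_{x\in K}f(x)$ and we are done. Thus the substantive case is that every minimizing sequence escapes to infinity, i.e. $\|x_k\|\to\infty$, and this is what I would assume henceforth.

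The first key step is a one-dimensional fact about convex polynomials: a univariate convex polynomial $\phi$ that is bounded below and satisfies $\phi(t)\le\phi(0)$ for all $t\ge 0$ must be constant, because a nonconstant polynomial bounded below necessarily tends to $+\infty$. I would use this to extract a direction of constancy. Fixing $x_0\in K$ and passing to a subsequence, normalise $d_k:=(x_k-x_0)/\|x_k-x_0\|\to d$ with $\|d\|=1$; since $K$ is closed and convex, $d\in 0^+K$, so the ray $\{x_0+sd:s\ge 0\}$ lies in $K$. Convexity of $f$ along the segments $[x_0,x_k]$, letting $k\to\infty$, yields $f(x_0+sd)\le f(x_0)$ for all $s\ge 0$; as $s\mapsto f(x_0+sd)$ is a univariate convex polynomial bounded below by $\gamma$, the one-dimensional fact forces it to be constant. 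Hence $f$ is constant along $d$, so $d$ lies in the lineality space $L:=\{v:f(\cdot+v)\equiv f(\cdot)\}$ of $f$.

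I would then induct on the dimension $n$. If $-d\in 0^+K$ as well, then $\mathbb{R}d$ is a recession line of $K$; the inequalities $g_i^\infty(d)\le 0$ and $g_i^\infty(-d)\le 0$ force $g_i^\infty(d)=g_i^\infty(-d)=0$, so each $g_i$ (like $f$) is constant along $d$, and all the data descend to convex polynomials on the quotient $\mathbb{R}^n/\mathbb{R}d\cong\mathbb{R}^{n-1}$, where the feasible set is nonempty and the infimum is still $\gamma>-\infty$; the inductive hypothesis produces a minimizer that lifts back to $K$. The remaining case $-d\notin 0^+K$ is where the difficulty concentrates: here $d$ is only a one-sided recession direction, so one cannot quotient, and the natural candidate for dimension reduction—an exposed set $K\cap\{g_j=0\}$ obtained by sliding back along $-d$ until a constraint becomes active—need not be convex when $g_j$ is a genuinely nonlinear convex polynomial.

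Accordingly, the main obstacle is exactly this last case. Equivalently, since $f$ is constant along $L$ it factors as $f=\bar f\circ P$ with $P$ the orthogonal projection onto $L^\perp$ and $\bar f$ a convex polynomial on $L^\perp$ with trivial lineality, hence coercive; the only thing that can then go wrong is that a bounded (by coercivity) minimizing sequence in $L^\perp$ converges to a point just outside the projection $P(K)$. So the crux reduces to showing that $P(K)$ is closed. Resolving this requires a careful asymptotic analysis of how the active constraints $g_i$ behave along the recession direction $d$—the technical core of the Frank--Wolfe--type phenomenon—rather than any formal convexity manipulation, and this is the step I expect to occupy the bulk of the work.
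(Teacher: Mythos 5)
There is a genuine gap here, and you name it yourself: your argument stops exactly where the theorem starts. The reductions you carry out are correct as far as they go --- the bounded-minimizing-sequence case; the extraction of a unit recession direction $d\in 0^{+}K$ along which $f$ is constant (via the fact that a univariate convex polynomial bounded on a ray is constant, hence constant on the whole line); the quotient argument when both $d$ and $-d$ are recession directions; and the reformulation of the remaining case as closedness of the projection $P(K)$ of $K$ along the constancy space $L$ of $f$. But the one-sided case ($d\in 0^{+}K$, $-d\notin 0^{+}K$) is not a technical remainder that can be deferred: it is the entire content of the Frank--Wolfe-type theorem, and your text concedes it is unresolved (``this is the step I expect to occupy the bulk of the work''). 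As submitted, this is a proof plan with the decisive step missing, not a proof. A smaller slip: ``trivial lineality, hence coercive'' is not valid as stated, since coercivity of $\bar f$ would require $\bar f$ to be bounded below on all of $L^{\perp}$, whereas you only know $f\geq\gamma$ on $K$; the reduction can be repaired (a ray argument shows the projected minimizing sequence is bounded), but the repaired version lands on the same unresolved crux.

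That the missing step is substantive, and cannot follow from the convexity manipulations you have already used, is shown by a non-polynomial example: minimize $f(x_{1},x_{2})=x_{2}$ over $K=\{(x_{1},x_{2}) : e^{-x_{1}}-x_{2}\leq 0\}$. Here $f$ and the constraint are smooth convex functions, $K$ is closed and nonempty, and $\inf_{K}f=0$ is finite but not attained. In your notation $L=\mathbb{R}e_{1}$, $d=e_{1}$ is precisely a one-sided recession direction of $K$, and $P(K)=(0,+\infty)$ fails to be closed --- so every fact you established holds in this example while the conclusion fails. Whatever closes the argument must therefore exploit polynomiality of the $g_{i}$ in an essential, quantitative way; this asymptotic analysis is exactly what Belousov and Klatte carry out. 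Note also that the paper itself offers no proof to compare against: Lemma~\ref{lemma3} is quoted as a known result from \cite{Belousov2002} and used as a black box, so the burden of that asymptotic analysis cannot be discharged by appeal to anything in the paper.
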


Now, we recall the following result that shows how to confirm whether a polynomial can be written as a sum of squares via semidefinite programming.
\begin{proposition}\label{proposition1}{\rm \cite{Lasserre2009book}}
A polynomial $f\in\mathbb{R}[x]_{2d}$ has a sum of squares decomposition if and only if there exists $Q\in S^{s(d)}_{+}$ such that $f(x)=\langle v_{d}(x)v_{d}(x)^{T},Q\rangle$ for all $x\in\mathbb{R}^{n}.$
\end{proposition}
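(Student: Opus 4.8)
The plan is to establish a direct correspondence between sum-of-squares decompositions of $f$ and positive semidefinite Gram matrices $Q$, resting on the single algebraic identity
\begin{equation*}
\langle v_d(x)v_d(x)^T, Q\rangle = \operatorname{tr}\!\bigl(Q\,v_d(x)v_d(x)^T\bigr) = v_d(x)^T Q\, v_d(x),
\end{equation*}
valid for every $Q\in S^{s(d)}$ and every $x\in\mathbb{R}^n$. Thus the claimed representation $f(x)=\langle v_d(x)v_d(x)^T,Q\rangle$ is nothing but the quadratic form $v_d(x)^T Q\,v_d(x)$ in the monomial vector $v_d(x)$, and the proposition asserts that $f$ is a sum of squares exactly when this quadratic form can be realized with a positive semidefinite matrix.

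First I would dispatch the reverse implication, which is the easier of the two. Suppose $Q\in S^{s(d)}_+$ satisfies $f(x)=v_d(x)^T Q\,v_d(x)$ for all $x$. Since $Q$ is symmetric positive semidefinite, it admits a factorization $Q=V^T V$ for some real matrix $V$ (equivalently, a spectral decomposition $Q=\sum_k\lambda_k u_k u_k^T$ with $\lambda_k\ge 0$). Substituting gives
\begin{equation*}
f(x)=v_d(x)^T V^T V\,v_d(x)=\|V v_d(x)\|^2=\sum_{k}\bigl(V v_d(x)\bigr)_k^2,
\end{equation*}
and each entry $\bigl(V v_d(x)\bigr)_k$ is a linear combination of the monomials in $v_d(x)$, hence a polynomial of degree at most $d$. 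Therefore $f$ is exhibited as a finite sum of squares.

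For the forward implication, suppose $f=\sum_{\ell=1}^{s}q_\ell^2$. The key preliminary step, and the one requiring the most care, is to verify that each $q_\ell$ may be taken to have degree at most $d$, so that $q_\ell\in\mathbb{R}[x]_d$ and $q_\ell(x)=c_\ell^T v_d(x)$ for some coefficient vector $c_\ell\in\mathbb{R}^{s(d)}$. This is precisely where the hypothesis $\deg f\le 2d$ enters: if $m:=\max_\ell\deg q_\ell$ exceeded $d$, then the homogeneous part of $\sum_\ell q_\ell^2$ of degree $2m$ would equal $\sum_{\ell:\,\deg q_\ell=m}\bigl(\text{top form of }q_\ell\bigr)^2$, a sum of squares of forms at least one of which is nonzero and which therefore cannot vanish identically over $\mathbb{R}^n$; but $2m>2d\ge\deg f$ forces this top part to be zero, a contradiction. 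Granting this, set $Q:=\sum_{\ell=1}^{s}c_\ell c_\ell^T$. Each rank-one summand $c_\ell c_\ell^T$ lies in $S^{s(d)}_+$, so $Q\in S^{s(d)}_+$, and
\begin{equation*}
v_d(x)^T Q\,v_d(x)=\sum_{\ell=1}^{s}\bigl(c_\ell^T v_d(x)\bigr)^2=\sum_{\ell=1}^{s}q_\ell(x)^2=f(x),
\end{equation*}
which is the desired representation. The main obstacle is thus not the linear algebra but the degree-truncation step; once it is secured, both directions reduce to reading the factorization $Q=V^TV$ in its two possible senses.
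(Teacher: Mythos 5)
Your proof is correct: the paper states this result without proof (it is cited to Lasserre's book), and your argument is precisely the standard Gram-matrix correspondence used there --- reading $\langle v_d(x)v_d(x)^T,Q\rangle$ as the quadratic form $v_d(x)^TQ\,v_d(x)$, factoring $Q=V^TV$ for one direction, and assembling $Q=\sum_\ell c_\ell c_\ell^T$ from coefficient vectors for the other. You also correctly secure the one genuinely nontrivial point, the degree-truncation step $\deg q_\ell\le d$, via the top-degree homogeneous-form argument, so nothing is missing.
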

By letting $v_{d}(x)v_{d}(x)^{T}:=\sum_{\alpha\in\mathbb{N}_{2d}^n}x^{\alpha} \mathcal{B}_{\alpha},$
we have $f(x)=\sum_{\alpha\in\mathbb{N}^{n}_{2d}}f_{\alpha} x^{\alpha}$ is a sum of squares if and only if we can solve the following semidefinite feasibility problem \cite{Lasserre2009book}:
\begin{equation*}
\text{Find } Q\in S^{s(d)}_{+} \text{ such that } \langle \mathcal{B}_{\alpha},Q\rangle=f_{\alpha}, \ \forall \alpha\in\mathbb{N}_{2d}^n.
\end{equation*}

\medskip
Below, let us recall a very interesting subclass of convex polynomials in $\mathbb{R}[x]$ introduced by Helton and Nie~\cite{Helton2010} (see also \cite{Ahmadi2012,Ahmadi2013}).
\begin{definition}\label{SOS-convexity}{\rm \cite{Ahmadi2012,Ahmadi2013,Helton2010} A real polynomial $f$ on $\mathbb{R}^{n}$ is called {\it SOS-convex} if there exists a matrix polynomial $F(x)$ such that $\nabla^{2}f(x)=F(x)F(x)^{T}.$}
\end{definition}
It is worth noting that an SOS-convex polynomial is convex; but the converse is not true, which means that there exists a convex polynomial that is not SOS-convex \cite{Ahmadi2012,Ahmadi2013}.
Besides, SOS-convex polynomials enjoy many important properties, we list the following two of them for convenience. For more properties, see \cite{Helton2010}.
\begin{lemma}\label{lemma4} {\rm \cite[Lemma 8]{Helton2010}}  Let $f$ be an SOS-convex polynomial.
Suppose that there exists $\overline{x}\in \mathbb{R}^{n}$ such that $f(\overline{x}) = 0$ and $\nabla f(\overline{x}) = 0.$
Then$,$ $f$ is a sum of squares polynomial.
\end{lemma}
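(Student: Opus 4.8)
The plan is to exploit the Hessian factorization guaranteed by SOS-convexity together with the integral form of the second-order Taylor expansion of $f$ along rays issuing from $\overline x$. After translating so that $\overline x = 0$ (an affine change of variables that preserves both SOS-convexity and the property of being a sum of squares), the hypotheses read $f(0)=0$ and $\nabla f(0)=0$. Writing
\begin{equation*}
f(x) = f(0) + \nabla f(0)^T x + \int_0^1 (1-t)\, x^T \nabla^2 f(tx)\, x \, dt,
\end{equation*}
the first two terms vanish, so $f(x) = \int_0^1 (1-t)\, x^T\nabla^2 f(tx)\,x\,dt$. Since $f$ is SOS-convex, Definition~\ref{SOS-convexity} supplies a matrix polynomial $F$ with $\nabla^2 f(y)=F(y)F(y)^T$, whence $x^T\nabla^2 f(tx)x = \bigl\|F(tx)^T x\bigr\|^2$ and
\begin{equation*}
f(x) = \int_0^1 (1-t)\,\bigl\|F(tx)^T x\bigr\|^2\,dt.
\end{equation*}

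Next I would observe that the vector $w(x,t):=F(tx)^T x$ has polynomial entries in $(x,t)$, because $y\mapsto F(y)$ is a matrix polynomial and $y=tx$ is a monomial substitution. Collecting powers of $t$ gives a \emph{finite} expansion $w(x,t)=\sum_{k=0}^{N} t^k\, p_k(x)$ with polynomial vectors $p_k$. Squaring and integrating term by term then yields
\begin{equation*}
f(x) = \sum_{k=0}^N\sum_{l=0}^N a_{k+l}\, p_k(x)^T p_l(x), \qquad a_m := \int_0^1 (1-t)\,t^m\,dt = \frac{1}{(m+1)(m+2)},
\end{equation*}
so $f$ is now a finite bilinear combination of the inner products $p_k^T p_l$ governed by the Hankel matrix $A:=(a_{k+l})_{0\le k,l\le N}$.

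The decisive step is to convert this finite quadratic expression into an honest sum of squares, which hinges on $A$ being positive semidefinite. I would verify this straight from the integral defining its entries: for any $\xi=(\xi_k)$,
\begin{equation*}
\xi^T A\xi = \sum_{k,l}\xi_k\xi_l\int_0^1 (1-t)\,t^{k+l}\,dt = \int_0^1 (1-t)\Bigl(\sum_k \xi_k t^k\Bigr)^2 dt \ge 0,
\end{equation*}
since the weight $1-t$ is non-negative on $[0,1]$; thus $A$ is the positive semidefinite moment matrix of the measure $(1-t)\,dt$. Factoring $A=LL^T$ and setting $q_m(x):=\sum_k L_{km}\,p_k(x)$, the double sum regroups as $f(x)=\sum_m \|q_m(x)\|^2$, a finite sum of squares of polynomials, which is the desired conclusion.

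I expect the main obstacle to be exactly this passage from the continuous ``integral of squares'' to a finite sum of squares: the integrand is a genuine square only pointwise in $t$, and it is the Hankel/moment-matrix structure of the coefficients $a_{k+l}$, together with the non-negativity of the weight $1-t$, that lets the integral be collapsed into finitely many polynomial squares. The remaining ingredients (smoothness of the polynomial $f$ justifying Taylor's formula, finiteness of the $t$-expansion, and the harmless translation to $\overline x=0$) are routine.
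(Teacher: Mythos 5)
Your proof is correct, but note that the paper itself does not prove this lemma at all: it is quoted verbatim from Helton--Nie \cite[Lemma 8]{Helton2010}, so the only comparison available is with that cited source. Your argument is essentially a faithful reconstruction of the Helton--Nie proof: integral form of the Taylor remainder at the point where $f$ and $\nabla f$ vanish, the factorization $\nabla^2 f = FF^T$ from SOS-convexity to write the integrand as $\|F(tx)^T x\|^2$, and then the key step of collapsing the integral of squares into a finite sum of squares via positive semidefiniteness of the coefficient (moment) matrix and a factorization $A = LL^T$. The only cosmetic difference is that Helton--Nie use the double-integral remainder
\begin{equation*}
f(x) = \int_0^1\!\!\int_0^1 t\,(x-\overline{x})^T \nabla^2 f\bigl(\overline{x}+st(x-\overline{x})\bigr)(x-\overline{x})\, ds\, dt,
\end{equation*}
whose coefficient matrix is the moment matrix of the measure $t\,ds\,dt$ on the unit square, whereas you use the one-variable remainder with weight $(1-t)$ and the Hankel matrix of the measure $(1-t)\,dt$ on $[0,1]$; this is a slight streamlining, not a different method. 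All the details you flag as routine (the translation to $\overline{x}=0$, finiteness of the $t$-expansion, term-by-term integration) are indeed unproblematic, so the argument is complete.
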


\begin{lemma}\label{lemma5}{\rm \cite{Lasserre2014}}
Let $f\in\mathbb{R}[x]_{2d}$ be SOS-convex$,$
and let $y=(y_{\alpha})_{\alpha\in\mathbb{N}_{2d}^n}$ satisfy $y_{0}=1$ and $\sum_{\alpha\in\mathbb{N}_{2d}^n}y_{\alpha} \mathcal{B}_{\alpha}\succeq0.$
Let $L_y \colon \mathbb{R}[x]\to\mathbb{R}$ be a linear functional defined by $L_y(f):=\sum_{\alpha\in\mathbb{N}_{2d}^n} f_{\alpha} y_{\alpha},$ where $f(x)=\sum_{\alpha\in\mathbb{N}_{2d}^n} f_{\alpha} x^{\alpha}.$ Then
\begin{equation*}
L_y(f)\geq f(L_y(x)),
\end{equation*}
where $L_y(x):=(L_y(x_{1}),\ldots,L_y(x_{n})).$
\end{lemma}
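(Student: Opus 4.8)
The plan is to read $L_y$ as a pseudo-expectation and establish the inequality as a Jensen-type estimate, built around the first-order Taylor remainder of $f$ at the "mean" point $\bar{x} := L_y(x) = (L_y(x_1),\ldots,L_y(x_n))$. Concretely, I would introduce the auxiliary polynomial
\[
h(x) := f(x) - f(\bar{x}) - \langle \nabla f(\bar{x}), x - \bar{x}\rangle,
\]
which again has degree at most $2d$. The key structural observation is that $h$ inherits SOS-convexity from $f$: since subtracting an affine function leaves the Hessian unchanged, $\nabla^2 h(x) = \nabla^2 f(x) = F(x)F(x)^T$. Moreover $h$ attains a critical value of zero at $\bar{x}$, namely $h(\bar{x}) = 0$ and $\nabla h(\bar{x}) = \nabla f(\bar{x}) - \nabla f(\bar{x}) = 0$. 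This is exactly the configuration in which the earlier results become available.

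With this setup, Lemma~\ref{lemma4} applies directly to $h$ and shows that $h$ is a sum of squares; since $\deg h \le 2d$, Proposition~\ref{proposition1} then furnishes a matrix $Q \in S^{s(d)}_{+}$ with $h(x) = \langle v_d(x)v_d(x)^T, Q\rangle$. Applying $L_y$ entrywise and using $v_d(x)v_d(x)^T = \sum_{\alpha} x^\alpha \mathcal{B}_\alpha$ together with $L_y(x^\alpha) = y_\alpha$, one obtains $L_y(v_d(x)v_d(x)^T) = \sum_\alpha y_\alpha \mathcal{B}_\alpha = {\rm M}_d(y)$, whence
\[
L_y(h) = \langle {\rm M}_d(y), Q\rangle = \tr\big({\rm M}_d(y)\,Q\big) \ge 0,
\]
the last step because the trace inner product of two positive semidefinite matrices is nonnegative and ${\rm M}_d(y)\succeq 0$ by hypothesis. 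On the other hand, evaluating $L_y(h)$ directly from its definition and using $L_y(1) = y_0 = 1$, $L_y(x_i) = \bar{x}_i$ and hence $L_y(x_i - \bar{x}_i) = 0$, linearity gives
\[
L_y(h) = L_y(f) - f(\bar{x})\,L_y(1) - \langle \nabla f(\bar{x}), L_y(x-\bar{x})\rangle = L_y(f) - f(\bar{x}).
\]
Combining the two evaluations yields $L_y(f) - f(\bar{x}) \ge 0$, which is precisely $L_y(f) \ge f(L_y(x))$.

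The one place demanding care, rather than a genuine obstacle, is the interface between the algebraic expansion $v_d(x)v_d(x)^T = \sum_\alpha x^\alpha \mathcal{B}_\alpha$ and the moment matrix ${\rm M}_d(y) = \sum_\alpha y_\alpha \mathcal{B}_\alpha$: one must confirm these use the \emph{same} symmetric matrices $\mathcal{B}_\alpha$, so that applying $L_y$ to $v_d(x)v_d(x)^T$ returns ${\rm M}_d(y)$ exactly, and correspondingly that the degree bookkeeping ($\deg h \le 2d$, so the Gram matrix $Q$ has order $s(d)$) matches the $s(2d)$-indexing of $y$. The conceptual heart of the argument is the reduction via Lemma~\ref{lemma4}: it converts the desired Jensen inequality into a single invocation of the fact that an SOS-convex polynomial with a zero critical value is a sum of squares, thereby sidestepping any integral Taylor-remainder representation of the gradient inequality or the full Helton--Nie characterization of SOS-convexity.
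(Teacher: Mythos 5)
Your proof is correct, and it is essentially the standard argument behind the paper's citation (the paper itself states this lemma without proof, deferring to the reference): form the first-order Taylor remainder $h(x)=f(x)-f(\bar{x})-\langle\nabla f(\bar{x}),x-\bar{x}\rangle$ at $\bar{x}=L_y(x)$, note it is SOS-convex with $h(\bar{x})=0$ and $\nabla h(\bar{x})=0$, invoke Lemma~\ref{lemma4} to get $h\in\Sigma[x]_{2d}^2$, and pair its Gram matrix from Proposition~\ref{proposition1} with ${\rm M}_{d}(y)\succeq 0$ to conclude $L_y(f)-f(L_y(x))=L_y(h)=\langle {\rm M}_{d}(y),Q\rangle\geq 0$. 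The degree bookkeeping and the identification of the matrices $\mathcal{B}_{\alpha}$ in $v_{d}(x)v_{d}(x)^{T}=\sum_{\alpha}x^{\alpha}\mathcal{B}_{\alpha}$ with those defining ${\rm M}_{d}(y)$ are exactly as the paper's conventions require, so there is no gap.
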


Finally, we close the section by recalling the notion of SOS-convex semi-algebraic functions, which was introduced and studied systematically by Chieu et al. \cite{Chieu2018}.
\begin{definition}[SOS-convex semi-algebraic functions]\label{SOS-SA}{\rm
A real function $f$ is called {\it SOS-convex semi-algebraic} on $\mathbb{R}^{n}$ if it admits a representation
\begin{equation*}
f(x) := \sup_{y\in\Omega}\bigg\{h_{0}(x) +\sum^s_{j=1}y_jh_j(x)\bigg\},
\end{equation*}
where
\begin{itemize}
  \item[{\rm (a)}] each $h_j,$ $j = 0, 1,\ldots,s,$ is a polynomial, and for each $y\in\Omega,$ $h_{0}+\sum^{s}_{j=1}y_jh_j$ is an SOS-convex polynomial on $\mathbb{R}^{n};$
  \item[{\rm (b)}] $\Omega$ is a nonempty compact semidefinite program representable set given by
\end{itemize}
\begin{equation*}
 \Omega:= \left\{y\in \mathbb{R}^s : \exists\, z\in \mathbb{R}^{p} \textrm{ s.t. } A_{0} +\sum^{s}_{j=1}y_{j}A_{j} +\sum^{p}_{\ell=1}z_{\ell}B_{\ell} \succeq 0\right\},
 \end{equation*}
with $A_{j},$ $j=0,1,\ldots,s,$ and $B_{\ell},$ $\ell=1,\ldots,p,$ being $t\times t$ symmetric matrices.}
\end{definition}

%\begin{remark}{\rm
%It is worth mentioning that every SOS-convex semi-algebraic function is continuous.
%To see this, let $f(x) = \sup_{y\in\Omega}\{h_{0}(x) +\sum^{s}_{j=1}y_{j}h_{j}(x)\}$ be an SOS-convex semi-algebraic function.
%Since $h_{0}(x) +\sum^{s}_{j=1}y_{j}h_{j}(x)$ is a polynomial in $\mathbb{R}[x;y],$ it is a locally Lipschitz continuous function for every $(x,y).$
%Note that $\Omega$ is compact.
%This implies that $f$ is a locally Lipschitz continuous function for every $x.$
%Hence, $f$ is continuous on $\mathbb{R}^{n}.$
%}\end{remark}

\section{Main Results}\label{sect:3}
In this section, we focus on the study of solving a class of fractional programs with SOS-convex semi-algebraic functions.
\leqnomode
\begin{align}\label{FP}
\min\limits_{x\in\mathbb{R}^{n}} \left\{ \frac{f_{m+1}(x)}{-f_{m+2}(x)} \colon f_{i}(x)\leq0, \ i=1,\ldots,m \right\},  \tag{${\rm FP}$}
\end{align}
%\reqnomode
where each $f_{i},$ $i=1,\ldots,m+2,$ is SOS-convex semi-algebraic function in the form
\begin{equation}\label{function f_{i}}
f_{i}(x) := \sup_{\left(y_{1}^{i},\ldots,y_{s_{i}}^{i}\right) \in \Omega_{i}}\left\{h_{0}^{i}(x) +\sum^{s_{i}}_{j=1}y_{j}^{i}h_{j}^{i}(x)\right\},
\end{equation}
such that
\begin{itemize}
  \item[(a)] each $h_{j}^{i}, j=0, \ldots, s_{i}, i=1,2, \ldots, m+2,$ is a polynomial with degree at most $2d,$ and for each $y^{i}:=(y_{1}^{i},\ldots,y_{s_{i}}^{i})\in\Omega_{i},$ $h_{0}^{i}+\sum^{s_{i}}_{j=1}y_{j}^{i}h_{j}^{i}$ is an SOS-convex polynomial on $\mathbb{R}^{n};$
  \item[(b)] $\Omega_{i}, i=1,2, \ldots, m+2,$ is a nonempty compact semidefinite program representable set given by
\end{itemize}
\begin{equation*}
 \Omega_{i}:= \left\{(y_{1}^{i},\ldots,y_{s_{i}}^{i})\in \mathbb{R}^{s_{i}} : \exists z^{i}\in \mathbb{R}^{p_{i}} \textrm{ s.t. } A_{0}^{i} +\sum^{s_{i}}_{j=1}y_{j}^{i}A_{j}^{i} +\sum^{p_{i}}_{\ell=1}z_{\ell}^{i}B_{\ell}^{i} \succeq 0\right\},
\end{equation*}
here, $A_{j}^{i},$ $j=0,1,\ldots,s_{i},$ and $B_{\ell}^{i},$ $\ell=1,\ldots,p_{i},$ are $t_{i}\times t_{i}$ symmetric matrices.
Moreover, we assume that $f_{m+1}(x)\geq 0$ and $-f_{m+2}(x)>0$ for all $x \in K,$ where $K$ is the feasible set of the problem~\eqref{FP}, defined by
$$K:=\{x\in\mathbb{R}^{n}: f_{i}(x)\leq0, \ i=1,\ldots,m\},$$
which is assumed to be nonempty.
Let $I:=\{1,2,\ldots,m+2\}.$

\begin{remark}%\label{applicationslemma}
{\rm
We list the following frequently occurring objectives:
maximization of return/risk; minimization of cost/time; minimization of input/output.
Since the class of SOS-convex semi-algebraic functions contains many common nonsmooth convex functions; see, for example~\cite[Examples 3.1 \& 3.2]{Chieu2018}, thus
if the considered data (return, risk, input, output, etc.) is SOS-convex semi-algebraic function, then it can be exactly formulated in the form of the model~\eqref{FP}.
}\end{remark}

To proceed, by denoting the set
\begin{align}\label{convexcone}
\mathcal{C}:=\bigcup_{y^{i}\in\Omega_{i},\lambda_{i}\geq0}{\rm epi}\left(\sum_{i=1}^m\lambda_{i}\left(h_{0}^{i}+\sum_{j=1}^{s_{i}}y^{i}_{j}h^{i}_{j}\right)\right)^*,
\end{align}
which is a convex cone (see, e.g., \cite[Proposition~2.3]{Jeyakumar2010}),
we first give the following Farkas-type lemma, which plays an important role in deriving our results.
For completeness, we give a short proof.
\begin{lemma}\label{lemma6}
Let $f_{i}\colon\mathbb{R}^{n}\to\mathbb{R},$ $i\in I,$ be convex functions defined as in \eqref{function f_{i}}$,$ and let $\gamma\in\mathbb{R}_{+}.$
Assume that the set $K:=\{x\in\mathbb{R}^{n}: f_{i}(x)\leq0, \ i=1,\ldots,m\}$ is nonempty$.$
Then the following statements are equivalent$:$
\begin{itemize}
\item[{\rm (i)}] $K\subseteq \left\{x\in\mathbb{R}^{n} \colon f_{m+1}(x)+\gamma f_{m+2}(x)\geq0 \right\};$
\item[{\rm (ii)}] $(0,0)\in {\rm epi\,}\left(f_{m+1}+\gamma f_{m+2}\right)^*+{\rm cl\,}\bigcup\limits_{y^{i}\in\Omega_{i},\lambda_{i}\geq0}{\rm epi}\left(\sum\limits_{i=1}^m\lambda_{i}\left(h_{0}^{i}+\sum\limits_{j=1}^{s_{i}}y^{i}_{j}h^{i}_{j}\right)\right)^*.$
\end{itemize}
\end{lemma}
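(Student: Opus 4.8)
The plan is to reduce statement (i) to a single condition on the value of a conjugate function at the origin, and then to match the relevant epigraph with the right-hand side of (ii) through the epigraph calculus of Lemmas~\ref{lemma1} and~\ref{lemma2}. Set $F := f_{m+1} + \gamma f_{m+2}$. Since $\gamma \geq 0$ and each $f_{i}$ is a supremum over the compact set $\Omega_{i}$ of polynomials that are SOS-convex (hence convex) in $x$, $F$ is a finite-valued convex function on $\mathbb{R}^{n}$, and therefore continuous; likewise each $f_{i}$ is continuous, so $K$ is closed and convex and $\delta_{K}$ is proper l.s.c. convex with ${\rm dom\,}\delta_{K}=K\neq\emptyset$.

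First I would note that (i) is equivalent to $\inf_{x\in\mathbb{R}^{n}}(F+\delta_{K})(x)\geq0$, because $(F+\delta_{K})(x)=f_{m+1}(x)+\gamma f_{m+2}(x)$ for $x\in K$ and $+\infty$ otherwise. Evaluating the conjugate at the origin gives $(F+\delta_{K})^{*}(0)=-\inf_{x}(F+\delta_{K})(x)$, so (i) holds if and only if $(F+\delta_{K})^{*}(0)\leq0$, that is, if and only if $(0,0)\in{\rm epi}(F+\delta_{K})^{*}$. This turns the set-inclusion (i) into a membership statement of the same shape as (ii).

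Next I would invoke the epigraph calculus. Because $F$ is continuous and ${\rm dom\,}F\cap{\rm dom\,}\delta_{K}=K\neq\emptyset$, Lemma~\ref{lemma1} gives the closure-free splitting ${\rm epi}(F+\delta_{K})^{*}={\rm epi\,}F^{*}+{\rm epi\,}\delta_{K}^{*}$, where ${\rm epi\,}F^{*}={\rm epi}(f_{m+1}+\gamma f_{m+2})^{*}$. It then remains to prove ${\rm epi\,}\delta_{K}^{*}={\rm cl\,}\mathcal{C}$. The decisive rewriting is $\delta_{K}=\sup_{\lambda_{i}\geq0}\sum_{i=1}^{m}\lambda_{i}f_{i}$; expanding each $f_{i}$ as its supremum over $\Omega_{i}$ and using $\lambda_{i}\geq0$ together with the independence of the $y^{i}$ across $i$, this collapses to one parametrized supremum $\delta_{K}=\sup\{\sum_{i=1}^{m}\lambda_{i}(h_{0}^{i}+\sum_{j=1}^{s_{i}}y_{j}^{i}h_{j}^{i}) : \lambda_{i}\geq0,\ y^{i}\in\Omega_{i}\}$ of convex polynomials. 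For any $x_{0}\in K$ the supremum of the values at $x_{0}$ is at most $0$, so Lemma~\ref{lemma2} applies and yields ${\rm epi\,}\delta_{K}^{*}={\rm cl}({\rm conv\,}\mathcal{C})$; since $\mathcal{C}$ is a convex cone, ${\rm conv\,}\mathcal{C}=\mathcal{C}$ and hence ${\rm epi\,}\delta_{K}^{*}={\rm cl\,}\mathcal{C}$. Combining the two displays gives exactly (ii).

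The step that requires the most care, and which I regard as the main obstacle, is this rewriting of $\delta_{K}$ as a single supremum of SOS-convex polynomials over the index set $\{(\lambda,y^{1},\ldots,y^{m}) : \lambda_{i}\geq0,\ y^{i}\in\Omega_{i}\}$ together with the verification of the Lemma~\ref{lemma2} hypotheses: one must justify the interchange $\sum_{i}\lambda_{i}\sup_{y^{i}}=\sup_{y^{1},\ldots,y^{m}}\sum_{i}\lambda_{i}$ (which uses $\lambda_{i}\geq0$ and the independent ranges $\Omega_{i}$), check that each member of the family is a proper l.s.c. convex function, and exhibit the feasible $x_{0}\in K$ that keeps the pointwise supremum finite. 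By contrast, the continuity of $F$ needed to remove the closure in Lemma~\ref{lemma1}, and the convex-cone property of $\mathcal{C}$ that lets the convex hull be dropped, are routine given the cited results.
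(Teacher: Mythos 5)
Your proof is correct and takes essentially the same route as the paper's: reduce (i) to the membership $(0,0)\in{\rm epi}\left(f_{m+1}+\gamma f_{m+2}+\delta_{K}\right)^*$, split that epigraph via Lemma~\ref{lemma1} using continuity of $f_{m+1}+\gamma f_{m+2}$, and identify ${\rm epi\,}\delta_{K}^*$ through the supremum representation of $\delta_{K}$, Lemma~\ref{lemma2}, and the convex-cone property of $\mathcal{C}$. The only differences are presentational: you make the conjugate-at-origin equivalence explicit via $(F+\delta_{K})^*(0)=-\inf_{x}(F+\delta_{K})(x)$ and spell out the interchange of the sum and the suprema over the $\Omega_{i}$, both of which the paper leaves implicit.
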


\begin{proof}
The statement ${\rm (i)}$  is equivalent to $\inf\limits_{x\in \mathbb{R}^{n}} \left\{f_{m+1}(x)+\gamma f_{m+2}(x)+\delta_{K}(x)\right\}\geq 0.$
Since $\langle 0,x\rangle-\left(f_{m+1}+\gamma f_{m+2}+\delta_{K}\right)(x)\leq 0$  for all $x\in\mathbb{R}^{n},$ we get $(f_{m+1}+\gamma f_{m+2}+\delta_{K})^*(0)\leq 0,$
equivalently, $(0,0)\in{\rm epi}(f_{m+1}+\gamma f_{m+2}+\delta_{K})^*.$
Since the function $f_{m+1}+\gamma f_{m+2}$ is continuous, by Lemma~\ref{lemma1}, we obtain
\begin{equation*}
(0,0)\in{\rm epi}(f_{m+1}+\gamma f_{m+2}+\delta_{K})^* = {\rm epi\,}(f_{m+1}+\gamma f_{m+2})^*+{\rm epi\,}\delta_{K}^*.
\end{equation*}
Note that
\begin{equation*}
\delta_{K}(x)=\sup_{y^{i}\in\Omega_{i},\lambda_{i}\geq0}\sum_{i=1}^m\lambda_{i}\left(h_{0}^{i}(x) +\sum^{s_{i}}_{j=1}y_{j}^{i}h_{j}^{i}(x)\right).
\end{equation*}
It follows from Lemma~\ref{lemma2} that
\begin{align}\label{farkas_lemma_rel2}
{\rm epi\,}\delta_{K}^*=&\,{\rm epi}\left(\sup_{y^{i}\in\Omega_{i},\lambda_{i}\geq0}\sum_{i=1}^m\lambda_{i}\left(h_{0}^{i}+\sum^{s_{i}}_{j=1}y_{j}^{i}h_{j}^{i}\right)\right)^*\notag\\
=&\, {\rm cl}\left({\rm conv}\bigcup_{y^{i}\in\Omega_{i},\lambda_{i}\geq0}{\rm epi}\left(\sum_{i=1}^m\lambda_{i}\left(h_{0}^{i} +\sum^{s_{i}}_{j=1}y_{j}^{i}h_{j}^{i}\right)\right)^*\right).
\end{align}
Moreover, since $\bigcup\limits_{y^{i}\in\Omega_{i},\lambda_{i}\geq0}{\rm epi}\left(\sum\limits_{i=1}^m\lambda_{i}\left(h_{0}^{i} +\sum\limits^{s_{i}}_{j=1}y_{j}^{i}h_{j}^{i}\right)\right)^*$ is a convex cone (see, e.g., \cite[Proposition~2.3]{Jeyakumar2010}), from \eqref{farkas_lemma_rel2}, we get the desired result.
\end{proof}

\subsection{Duality for the fractional programs with SOS-convex semi-algebraic functions}
Motivated by \cite[section 4]{Jeyakumar2014}, we now formulate the following SDP relaxation dual problem for the problem~\eqref{FP}.%, which can be formulated along with the proof of Theorem~\ref{thm1}.}
\leqnomode
\begin{align}\label{SDD}
\sup\limits_{\substack{\lambda_{j}^{i},z_{\ell}^{i}},X} \ \ \ & \lambda_{0}^{m+2}\tag{$\widehat{\rm Q}$}\\
\textrm{s.t.}\quad   &\sum_{i=1}^{m+2}\left(\lambda_{0}^{i}\left(h_{0}^{i}\right)_{\alpha} +\sum^{s_{i}}_{j=1}\lambda_{j}^{i}\left(h_{j}^{i}\right)_{\alpha}\right)= \left\langle \mathcal{B}_{\alpha}, X \right\rangle, \ \alpha\in\mathbb{N}^{n}_{2d},\notag\\
&\lambda_{0}^{i}A_{0}^{i} +\sum^{s_{i}}_{j=1}\lambda_{j}^{i}A_{j}^{i} +\sum^{p_{i}}_{\ell=1}z_{\ell}^{i}B_{\ell}^{i}\succeq0,\ i=1,\ldots,m+2,\notag\\
&\lambda_{0}^{m+1}=1, \ \lambda_{0}^{i}\geq0, \ i=1,\ldots,m,m+2, \notag\\
&\lambda_{j}^{i}\in\mathbb{R}, \ z_{\ell}^{i}\in\mathbb{R}, \  i\in I, \ j=1,\ldots, s_{i}, \ \ell=1,\ldots,p_{i}.\notag
\end{align}%\reqnomode
It is worth noting that the problem \eqref{SDD} is an SDP problem, and it can be efficiently solved via interior point methods.
Below, we propose a strong duality theorem between the problem~\eqref{FP} and its dual problem~\eqref{SDD}. Before that, we also need the following assumption.
\begin{assumption}\label{assumption1}
The convex cone $\mathcal{C}$ as in \eqref{convexcone} is closed.
\end{assumption}
\begin{remark}{\rm
It is worth mentioning that if the {\it Slater-type condition} holds for the problem~\eqref{FP}, that is, there exists $\hat{x}\in\mathbb{R}^{n}$ such that
%\begin{equation*}
$f_{i}(\hat{x}) < 0, \ i=1,\ldots,m,$
%\end{equation*}
then the convex cone $\mathcal{C}$ is closed; see, e.g., \cite[Proposition~3.2]{Jeyakumar2010}.
}
\end{remark}

\begin{theorem}[\textbf{strong duality theorem}]\label{thm1}
Suppose that Assumption~\ref{assumption1} holds.
Then
\begin{equation*}
\inf\eqref{FP}=\max\eqref{SDD}.
\end{equation*}
\end{theorem}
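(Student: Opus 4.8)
The plan is to prove the two inequalities $\sup\eqref{SDD}\le\inf\eqref{FP}$ (weak duality) and $\inf\eqref{FP}\le\max\eqref{SDD}$, the latter also producing a dual maximizer. Throughout, for $y=(y_1,\ldots,y_{s_i})\in\Omega_i$ I abbreviate $h^i_y:=h_0^i+\sum_{j=1}^{s_i}y_jh_j^i$, which is SOS-convex by assumption (a) and satisfies $h^i_y\le f_i$ pointwise, with $f_i=\sup_{y\in\Omega_i}h^i_y$. For weak duality I would read the constraints of a feasible point $(\lambda^i_j,z^i_\ell,X)$ of \eqref{SDD} as selections: whenever $\lambda_0^i>0$, the semidefinite constraint shows $(\lambda_1^i/\lambda_0^i,\ldots,\lambda_{s_i}^i/\lambda_0^i)=:y^i\in\Omega_i$, while compactness of $\Omega_i$ forces $\lambda_j^i=0$ (hence the $i$-th term to vanish) when $\lambda_0^i=0$. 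Proposition~\ref{proposition1} with $X\in S^{s(d)}_+$ turns the coefficient-matching constraint into the assertion that $\sum_{i=1}^{m+2}(\lambda_0^ih_0^i+\sum_j\lambda_j^ih_j^i)=\langle v_{d}(x)v_{d}(x)^{T},X\rangle$ is a sum of squares, hence $\ge0$ on $\R^n$. On $K$ the terms $i=1,\ldots,m$ are $\le0$ (since $h^i_{y^i}\le f_i\le0$ there and $\lambda_0^i\ge0$), so $h^{m+1}_{y^{m+1}}(x)+\lambda_0^{m+2}h^{m+2}_{y^{m+2}}(x)\ge0$; bounding these selections above by $f_{m+1}$ and by $\lambda_0^{m+2}f_{m+2}$ and dividing by $-f_{m+2}(x)>0$ gives $f_{m+1}(x)/(-f_{m+2}(x))\ge\lambda_0^{m+2}$ on $K$, so $\inf\eqref{FP}\ge\lambda_0^{m+2}$.

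For strong duality, set $\bar\gamma:=\inf\eqref{FP}$, which lies in $[0,\infty)$ because $f_{m+1}\ge0$ and $-f_{m+2}>0$ on the nonempty set $K$. By definition of $\bar\gamma$ one has $f_{m+1}(x)+\bar\gamma f_{m+2}(x)\ge0$ for all $x\in K$, i.e. statement (i) of Lemma~\ref{lemma6} with $\gamma=\bar\gamma\in\mathbb{R}_+$. The lemma, together with Assumption~\ref{assumption1} allowing me to discard the closure on $\mathcal C$, yields $(0,0)\in{\rm epi}(f_{m+1}+\bar\gamma f_{m+2})^*+\mathcal C$. Writing $(0,0)=(u,r)+(u',r')$ with the summands in the two respective sets, membership in $\mathcal C$ (as defined in \eqref{convexcone}) directly supplies $y^i\in\Omega_i$ and $\lambda_i\ge0$, $i=1,\ldots,m$, with $\sum_{i=1}^m\lambda_ih^i_{y^i}(x)-\langle u',x\rangle+r'\ge0$ on $\R^n$.

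The crucial step is to unfold the first summand into a single selection. Since $\bar\gamma\ge0$, one has $f_{m+1}+\bar\gamma f_{m+2}=\sup_{(y,y')\in\Omega_{m+1}\times\Omega_{m+2}}(h^{m+1}_y+\bar\gamma h^{m+2}_{y'})$, a supremum over a compact index set of polynomials depending affinely on the index. I would verify that $\bigcup_{(y,y')}{\rm epi}(h^{m+1}_y+\bar\gamma h^{m+2}_{y'})^*$ is already convex (affineness in the index and convexity of $\Omega_{m+1}\times\Omega_{m+2}$ make convex combinations of the defining inequalities inequalities of the same type) and closed (a sequential-limit argument using compactness of the index set), so that the closed-convex-hull in Lemma~\ref{lemma2} collapses to the exact equality ${\rm epi}(f_{m+1}+\bar\gamma f_{m+2})^*=\bigcup_{(y,y')}{\rm epi}(h^{m+1}_y+\bar\gamma h^{m+2}_{y'})^*$. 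This gives $y^{m+1}\in\Omega_{m+1}$, $y^{m+2}\in\Omega_{m+2}$ with $h^{m+1}_{y^{m+1}}(x)+\bar\gamma h^{m+2}_{y^{m+2}}(x)-\langle u,x\rangle+r\ge0$ on $\R^n$. Adding this to the $\mathcal C$-inequality and using $u+u'=0$, $r+r'=0$ cancels all linear and constant terms, leaving $P(x):=h^{m+1}_{y^{m+1}}(x)+\bar\gamma h^{m+2}_{y^{m+2}}(x)+\sum_{i=1}^m\lambda_ih^i_{y^i}(x)\ge0$ for all $x\in\R^n$, where $P$ is SOS-convex as a nonnegative combination of SOS-convex polynomials. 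To certify $P$ as a sum of squares I would apply Lemma~\ref{lemma3} (with $\inf P\ge0>-\infty$) to obtain a minimizer $\bar x$; then $P-P(\bar x)$ is SOS-convex, vanishes at $\bar x$ with zero gradient, and is a sum of squares by Lemma~\ref{lemma4}, whence $P$ is a sum of squares since $P(\bar x)\ge0$. Proposition~\ref{proposition1} then provides $X\succeq0$ with $P(x)=\langle v_{d}(x)v_{d}(x)^{T},X\rangle$, and I assemble the dual point by $\lambda_0^{m+1}=1,\ \lambda_j^{m+1}=y_j^{m+1}$; $\lambda_0^{m+2}=\bar\gamma,\ \lambda_j^{m+2}=\bar\gamma y_j^{m+2}$; $\lambda_0^i=\lambda_i,\ \lambda_j^i=\lambda_iy_j^i$, with $z^i$ certifying $y^i\in\Omega_i$. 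This is feasible for \eqref{SDD} with objective $\lambda_0^{m+2}=\bar\gamma$, so $\max\eqref{SDD}\ge\bar\gamma$; combined with weak duality this forces equality and attainment.

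I expect the main obstacle to be the unfolding in the third paragraph: passing from the abstract epigraph membership delivered by Lemma~\ref{lemma6}, which a priori carries the closed-convex-hull operations of Lemmas~\ref{lemma1}–\ref{lemma2}, to a single finite selection from each $\Omega_i$ and an explicit nonnegative SOS-convex polynomial. The affine dependence of $h^i_y$ on $y$ and the compactness of each $\Omega_i$ are precisely what render the relevant unions convex and closed, so the care must go into establishing these two structural facts and into treating the degenerate cases $\bar\gamma=0$ and $\lambda_i=0$, where the associated terms must drop out through the recession behaviour of the compact representable sets.
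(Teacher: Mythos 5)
Your proposal is correct, and its overall skeleton coincides with the paper's: your weak-duality half (reading a feasible point of \eqref{SDD} through the selections $\lambda^i_j/\lambda^i_0$, using compactness of $\Omega_i$ to force $\lambda^i_j=0$ when $\lambda^i_0=0$, invoking Proposition~\ref{proposition1}, and dividing by $-f_{m+2}(x)>0$ on $K$) is exactly the paper's argument, and your reverse half follows the same chain: Lemma~\ref{lemma6} under Assumption~\ref{assumption1}, then a nonnegative SOS-convex polynomial, then Lemma~\ref{lemma3} and Lemma~\ref{lemma4}, then Proposition~\ref{proposition1}, then assembly of a dual feasible point with objective value $\overline\gamma$. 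Where you genuinely diverge is the step producing \emph{fixed} selections $\overline y^{m+1}\in\Omega_{m+1}$, $\overline y^{m+2}\in\Omega_{m+2}$. The paper keeps $f_{m+1}+\overline\gamma f_{m+2}$ intact in \eqref{thm1rel3} and then asserts that, by compactness of $\Omega_i$, there exist fixed $\overline y^i$ with $f_i(x)=h^i_0(x)+\sum_j\overline y^i_jh^i_j(x)$ for $i=m+1,m+2$ --- a claim which, read as holding for all $x$ simultaneously, is false in general (the maximizer depends on $x$, as already for $|x|=\sup_{y\in[-1,1]}yx$), and read pointwise does not yield $\Phi\geq 0$ since replacing a supremum by a selection decreases the left-hand side; what that step really needs is an inf--sup interchange (a minimax argument exploiting convexity and compactness of $\Omega_{m+1}\times\Omega_{m+2}$ and convexity in $x$). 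You avoid this entirely by collapsing ${\rm epi}\,(f_{m+1}+\overline\gamma f_{m+2})^*$ to the exact union $\bigcup_{(y,y')}{\rm epi}\,\bigl(h^{m+1}_y+\overline\gamma h^{m+2}_{y'}\bigr)^*$, and the two structural facts you flag do hold: convexity of the union follows from affine dependence on the index together with convexity of the index set (a convex combination of the two supporting inequalities is a supporting inequality for the convex combination of indices), and closedness follows by extracting a convergent subsequence of indices from the compact set. So your route buys rigor precisely at the point where the paper is loose, at the cost of verifying exactness of Lemma~\ref{lemma2} for this structured family. A second, minor improvement: the paper asserts $\inf_x\Phi(x)=0$ (only $\inf_x\Phi(x)\geq 0$ is known) before applying Lemma~\ref{lemma4}, whereas your device of certifying $P-P(\overline x)$ and adding back the nonnegative constant $P(\overline x)$ handles the case $\inf_x P(x)>0$ cleanly.
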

\begin{proof}
We first show that $\inf\eqref{FP}\leq \max\eqref{SDD}.$
Let $\overline{\gamma}:=\inf\eqref{FP}\in\mathbb{R}_{+}.$
Then we have $f_{m+1}(x)+\overline{\gamma} f_{m+1}(x)\geq0$ for all $x\in K.$
It follows from Assumption~\ref{assumption1} and Lemma~\ref{lemma6} that there exist $\overline{\lambda}_{i}\geq0$ and $\overline{y}^{i}\in\Omega_{i},$ $i=1,\ldots,m,$ such that
\begin{equation*}
(0,0)\in {\rm epi\,}(f_{m+1}+\overline{\gamma} f_{m+2})^*+{\rm epi}\left(\sum_{i=1}^m\overline{\lambda}_{i}\left(h_{0}^{i}+\sum_{j=1}^{s_{i}}\overline{y}^{i}_{j}h^{i}_{j}\right)\right)^*.
\end{equation*}
Then, there exist $(\xi,\alpha)\in{\rm epi\,}(f_{m+1}+\overline{\gamma} f_{m+2})^*$ and $(\zeta,\beta)\in{\rm epi}(\sum_{i=1}^m\overline{\lambda}_{i}(h_{0}^{i}+\sum_{j=1}^{s_{i}}\overline{y}^{i}_{j}h^{i}_{j}))^*$ such that
$(0,0)=(\xi,\alpha)+(\zeta,\beta),$
and so, for each $x\in\mathbb{R}^{n},$ we have
\begin{align*}
&\langle \xi,x\rangle-(f_{m+1}(x)+\overline{\gamma} f_{m+2}(x))+\langle \zeta,x\rangle-\sum_{i=1}^m\overline{\lambda}_{i}\left(h_{0}^{i}(x)+\sum_{j=1}^{s_{i}}\overline{y}^{i}_{j}h^{i}_{j}(x)\right)\\
\leq\ &(f_{m+1}+\overline{\gamma} f_{m+2})^*(\xi)+\left(\sum_{i=1}^m\overline{\lambda}_{i}\left(h_{0}^{i}+\sum_{j=1}^{s_{i}}\overline{y}^{i}_{j}h^{i}_{j}\right)\right)^*(\zeta)\\
\leq\ & \alpha+\beta=0,
\end{align*}
i.e., for each $x\in \mathbb{R}^{n},$
\begin{align}\label{thm1rel3}
f_{m+1}(x)+\overline{\gamma} f_{m+2}(x)+\sum_{i=1}^m\overline{\lambda}_{i}\left(h_{0}^{i}(x)+\sum_{j=1}^{s_{i}}\overline{y}^{i}_{j}h^{i}_{j}(x)\right)\geq0.
\end{align}
Note that each $f_{i}$ is defined as $f_{i}(x) = \sup_{(y_{1}^{i},\ldots,y_{s_{i}}^{i})\in\Omega_{i}}\{h_{0}^{i}(x) +\sum^{s_{i}}_{j=1}y_{j}^{i}h_{j}^{i}(x)\}.$
Since each $\Omega_{i}$ is a compact set, there exist $\overline{y}^{i}\in\Omega_{i},$ $i=m+1,m+2,$ such that
\begin{equation*}
f_{i}(x)=h_{0}^{i}(x)+\sum_{j=1}^{s_{i}}\overline{y}^{i}_{j}h^{i}_{j}(x), \ i=m+1,m+2.
\end{equation*}
It follows from \eqref{thm1rel3} that
{\small
\begin{align*}
0\leq h_{0}^{m+1}(x)+\sum_{j=1}^{s_{m+1}}\overline{y}^{m+1}_{j}h^{m+1}_{j}(x)+\overline{\gamma} \left(h_{0}^{m+2}(x)+\sum_{j=1}^{s_{m+2}}\overline{y}^{m+2}_{j}h^{m+2}_{j}(x)\right)+\sum_{i=1}^m\overline{\lambda}_{i}\left(h_{0}^{i}(x)+\sum_{j=1}^{s_{i}}\overline{y}^{i}_{j}h^{i}_{j}(x)\right).
\end{align*}
Define that
\begin{align*}
\Phi(x):= h_{0}^{m+1}(x)+\sum_{j=1}^{s_{m+1}}\overline{y}^{m+1}_{j}h^{m+1}_{j}(x)+\overline{\gamma} \left(h_{0}^{m+2}(x)+\sum_{j=1}^{s_{m+2}}\overline{y}^{m+2}_{j}h^{m+2}_{j}(x)\right)
+\sum_{i=1}^m\overline{\lambda}_{i}\left(h_{0}^{i}(x)+\sum_{j=1}^{s_{i}}\overline{y}^{i}_{j}h^{i}_{j}(x)\right).
\end{align*}
}

Since $h_{0}^{i}+\sum_{j=1}^{s_{i}}\overline{y}^{i}_{j}h^{i}_{j},$ $i\in I,$ are SOS-convex, $\overline{\gamma}\geq0,$ and $\overline\lambda_{i}\geq0,$ $i=1,\ldots,m,$ $\Phi$ is an SOS-convex polynomial.
Moreover, since $\inf_{x\in\mathbb{R}^{n}}\Phi(x)=0,$ by Lemma~\ref{lemma3}, there exists $\overline{x}\in\mathbb{R}^{n}$ such that $\Phi(\overline{x})=0$ (and so, $\nabla \Phi(\overline{x})=0$).
It follows from Lemma~\ref{lemma4} that $\Phi$ is a sum of squares polynomial, i.e,
\begin{align}\label{thm1rel4}
h_{0}^{m+1}+\sum_{j=1}^{s_{m+1}}\overline{y}^{m+1}_{j}h^{m+1}_{j}+\overline{\gamma} \left(h_{0}^{m+2}+\sum_{j=1}^{s_{m+2}}\overline{y}^{m+2}_{j}h^{m+2}_{j}\right)+\sum_{i=1}^m\overline{\lambda}_{i}\left(h_{0}^{i}+\sum_{j=1}^{s_{i}}\overline{y}^{i}_{j}h^{i}_{j}\right) \in \Sigma^{2}_{2d}.
\end{align}
On the other hand, for each $i\in I,$ as $\overline{y}^{i}=(\overline{y}^{i}_{1},\ldots,\overline{y}^{i}_{s_{i}})\in\Omega_{i},$ there exists $\overline{z}^{i}:=(\overline{z}^{i}_{1},\ldots,\overline{z}^{i}_{p_{i}})\in \mathbb{R}^{p_{i}}$ such that
\begin{equation}\label{thm1rel5}
A_{0}^{i}+\sum_{j=1}^{s_{i}}\overline{y}_{j}^{i}A_{j}^{i}+\sum_{\ell=1}^{p_{i}}\overline{z}_{\ell}^{i}B_{\ell}^{i}\succeq0, \ i\in I.
\end{equation}
For each $i=1,\ldots,m,$ let $\lambda_{0}^{i}:=\overline{\lambda}_{i},$ $\lambda_{j}^{i}:=\overline{\lambda}_{i}\overline{y}_{j}^{i},$ and $z_{\ell}^{i}:=\overline{\lambda}_{i}\overline{z}_{\ell}^{i},$  $j=1,\ldots,s_{i},$ $\ell=1,\ldots,p_{i}.$
Let $\lambda_{0}^{m+1}:=1,$ $\lambda_{j}^{m+1}:=\overline{y}_{j}^{m+1},$ $j=1,\ldots,s_{m+1},$ and $z_{\ell}^{m+1}:=\overline{z}_{\ell}^{m+1},$ $\ell=1,\ldots,s_{m+1}.$
Let $\lambda_{0}^{m+2}:=\overline{\gamma},$ $\lambda_{j}^{m+2}:=\overline{\gamma}\,\overline{y}_{j}^{m+2},$ $j=1,\ldots,s_{m+2},$ and $z_{\ell}^{m+2}:=\overline{\gamma}\,\overline{z}_{\ell}^{m+2},$ $\ell=1,\ldots,s_{m+2}.$
Then, from \eqref{thm1rel4}, we have
\begin{align}\label{thm1rel6}
\sum_{i=1}^{m+2}\left(\lambda_{0}^{i}h_{0}^{i} +\sum^{s_{i}}_{j=1}\lambda_{j}^{i}h_{j}^{i}\right)\in\Sigma_{2d}^{2}.
\end{align}
Applying Proposition~\ref{proposition1} into \eqref{thm1rel6}, there exists $\overline{X}\in S^{s(d)}_{+}$ such that
\begin{equation*}
\sum_{i=1}^{m+2}\left(\lambda_{0}^{i}(h_{0}^{i})_{\alpha} +\sum^{s_{i}}_{j=1}\lambda_{j}^{i}(h_{j}^{i})_{\alpha}\right)=\langle B_{\alpha}, \overline{X}\rangle, \ \alpha\in\mathbb{N}^{n}_{2d}.
\end{equation*}
Note that $\overline{\gamma}$ and $\overline{\lambda}_{i},$ $i=1,\ldots,m,$ are nonnegative.
It follows from \eqref{thm1rel5} that we have
\begin{equation*}
\lambda_{0}^{i}A_{0}^{i} +\sum^{s_{i}}_{j=1}\lambda_{j}^{i}A_{j}^{i} +\sum^{p_{i}}_{\ell=1}z_{\ell}^{i}B_{\ell}^{i}\succeq0, \ i\in I.
\end{equation*}
It means that the tuple $(\lambda^{1},\ldots,\lambda^{m+2},z^{1},\ldots,z^{m+2},\overline{X})$ is a feasible solution of the problem~\eqref{SDD}, where for each $i\in I,$ $\lambda^{i}:=(\lambda^{i}_{0},\lambda^{i}_{1},\ldots,\lambda^{i}_{s_{i}})$ and $z^{i}:=(z^{i}_{1},\ldots,z^{i}_{p_{i}}).$
This fact leads to
\begin{equation*}
 \inf\eqref{FP}=\overline{\gamma}=\lambda_{0}^{m+2}\leq\max\eqref{SDD}.
\end{equation*}

Next, we claim that $\inf\eqref{FP}\geq\max\eqref{SDD}.$
Let $(\lambda^{1},\ldots,\lambda^{m+2},z^{1},\ldots,z^{m+2},X)$ be any feasible solution of the problem~\eqref{SDD}, where $\lambda^{i}:=(\lambda^{i}_{0},\lambda^{i}_{1},\ldots,\lambda^{i}_{s_{i}})\in\mathbb{R}_{+}\times\mathbb{R}^{s_{i}}$ and $z^{i}:=(z^{i}_{1},\ldots,z^{i}_{p_{i}})\in\mathbb{R}^{p_{i}},$ $i\in I.$
Then, by Proposition~\ref{proposition1}, we have
\begin{align}
&\sum_{i=1}^{m+2}\left(\lambda_{0}^{i}h_{0}^{i} +\sum^{s_{i}}_{j=1}\lambda_{j}^{i}h_{j}^{i}\right)\in\Sigma_{2d}^{2},\label{thm1rel1}\\
&\lambda_{0}^{i}A_{0}^{i} +\sum^{s_{i}}_{j=1}\lambda_{j}^{i}A_{j}^{i} +\sum^{p_{i}}_{\ell=1}z_{\ell}^{i}B_{\ell}^{i}\succeq0,\ i\in I.\notag
 \end{align}
For each $i\in I,$ pick $\overline{y}^{i}:=(\overline{y}^{i}_{1},\ldots,\overline{y}^{i}_{s_{i}})\in\Omega_{i}.$
Then, by the definition of $\Omega_{i},$ there exist $\overline{z}^{i}:=(\overline{z}^{i}_{1},\ldots,\overline{z}^{i}_{p_{i}})\in \mathbb{R}^{p_{i}},$ $i\in I,$ such that $A^{i}_{0}+\sum^{s_{i}}_{j=1}\overline y_{j}^{i}A_{j}^{i}+\sum^{p_{i}}_{\ell=1}\overline z_{\ell}^{i}B_{\ell}^{i}\succeq 0.$
Now, for each $i\in I$ and each $j=1,\ldots,s_{i},$ put
\begin{equation*}
\widetilde{y}_{j}^{i}:=
\left\{
  \begin{array}{ll}
    \frac{\lambda_{j}^{i}}{\lambda_{0}^{i}}, & \textrm{if } \lambda_{0}^{i}>0, \\
    \overline{y}_{j}^{i}, & \textrm{if } \lambda_{0}^{i}=0,
  \end{array}
\right.
\quad \textrm{ and } \quad
\widetilde{z}_{\ell}^{i}:=
\left\{
  \begin{array}{ll}
    \frac{z_{\ell}^{i}}{\lambda_{0}^{i}}, & \textrm{if } \lambda_{0}^{i}>0, \\
    \overline{z}_{\ell}^{i}, & \textrm{if } \lambda_{0}^{i}=0.
  \end{array}
\right.
\end{equation*}
Note that $\lambda_{0}^{m+1}=1.$
Then, for each $i\in I,$ we have
\begin{equation*}
A_{0}^{i}+\sum_{j=1}^{s_{i}}\widetilde{y}^{i}_{j}A_{j}^{i}+\sum_{\ell=1}^{p_{i}}\widetilde{z}^{i}_{j}B_{\ell}^{i}=
\left\{
  \begin{array}{ll}
\displaystyle \frac{1}{\lambda_{0}^{i}}\left(\lambda_{0}^{i}A_{0}^{i}+\sum_{j=1}^{s_{i}}\lambda_{j}^{i}A_{j}^{i}+\sum_{\ell=1}^{p_{i}}z_{\ell}^{i}B_{\ell}^{i}\right), & \textrm{if } \lambda_{0}^{i}>0, \\
\displaystyle A_{0}^{i}+\sum_{j=1}^{s_{i}}\overline{y}_{j}^{i}A_{j}^{i}+\sum_{\ell=1}^{p_{i}}\overline{z}_{\ell}^{i}B_{\ell}^{i}, & \textrm{if } \lambda_{0}^{i}=0,
  \end{array}
\right.
\end{equation*}
which is a positive semidefinite matrix.
So, we see that $\widetilde{y}^{i}:=(\widetilde{y}^{i}_{1},\ldots,\widetilde{y}^{i}_{s_{i}})\in\Omega_{i},$ $i\in I.$
Note that each set $\Omega_{i}$ is compact.
It follows from \cite[Lemma 4.1]{Chieu2018} that for each $i\in I,$ if $\lambda_{0}^{i}=0,$ then
$\lambda_{j}^{i}=0$ for all $j=1,\ldots,s_{i}.$
This implies that
\begin{equation*}
\lambda_{0}^{i}\left(h_{0}^{i}(x)+\sum^{s_{i}}_{j=1}\widetilde{y}_{j}^{i}h_{j}^{i}(x)\right)=\lambda_{0}^{i}h_{0}^{i}(x)+\sum^{s_{i}}_{j=1}\lambda_{0}^{i}\widetilde{y}_{j}^{i}h_{j}^{i}(x)=\lambda_{0}^{i}h_{0}^{i}(x)+\sum_{j=1}^{s_{i}}\lambda_{j}^{i}h_{j}^{i}(x).
\end{equation*}
This, together with \eqref{thm1rel1}, yields that for any $x\in K,$
\begin{align*}
&f_{m+1}(x)+\lambda_{0}^{m+2} f_{m+2}(x)\notag\\
\geq\ &\lambda_{0}^{m+1}\left(h_{0}^{m+1}(x)+\sum^{s_{m+1}}_{j=1}\widetilde{y}_{j}^{m+1}h_{j}^{m+1}(x)\right)+\lambda_{0}^{m+2}\left(h_{0}^{m+2}(x)+\sum^{s_{m+2}}_{j=1}\widetilde{y}_{j}^{m+2}h_{j}^{m+2}(x)\right)\\
\geq\ &\sum_{i=1}^{m+2}\left(\lambda_{0}^{i}h_{0}^{i}(x) +\sum^{s_{i}}_{j=1}\lambda_{j}^{i}h_{j}^{i}(x)\right)\geq0,
\end{align*}
i.e., $f_{m+1}(x)+\lambda_{0}^{m+2}f_{m+2}(x)\geq0$ for all $x\in K.$
So, we have
\begin{equation*}
 \inf\eqref{FP}\geq \max\eqref{SDD},
\end{equation*}
and thus, the proof is complete.
\end{proof}

\begin{remark}{\rm
As mentioned above, if the Slater condition holds for \eqref{FP}, then Assumption~\ref{assumption1} is also satisfied.
As a result, Theorem~\ref{thm1} still holds true under the Slater condition.
}\end{remark}

\subsection{Extracting optimal solutions}
As we have seen, the problem~\eqref{SDD} is essentially an SDP problem, which can be solved efficiently.
However, by solving the SDP problem~\eqref{SDD}, we can only obtain the optimal value of the problem~\eqref{FP} due to Theorem~\ref{thm1}.
Therefore, in order to get an optimal solution, one usually employs the Dinkelbach method (a parameter approach), by which one has to solve more SDP problems (see, e.g., \cite{Lee2018,Jiao2019b}).

In this part, we do not consider any parameter methods at all.
Instead, we propose a parameter-free approach for solving the problem~\eqref{FP}, and by which we can extract an optimal solution by solving one and only one SDP problem.
Note that there is no information on optimal solutions in the problem~\eqref{SDD}.
To proceed, we formulate the following {\it Lagrangian} dual problem of the problem~\eqref{SDD}.
\leqnomode
\begin{align}\label{SDP}
\inf\limits_{\substack{y\in\mathbb{R}^{s(2d)}\\Z_{i}\succeq0,i\in I}} \ \ & \sum_{\alpha\in\mathbb{N}^{n}_{2d}} \left(h_{0}^{m+1}\right)_{\alpha} y_{\alpha}+\left\langle A_{0}^{m+1},Z_{m+1}\right\rangle \tag{${\rm Q}$}\\
{\rm s.t.}\quad &  1+ \sum\limits_{\alpha\in\mathbb{N}_{2d}^n}\left(h_{0}^{m+2}\right)_{\alpha} y_{\alpha}+\left\langle A_{0}^{m+2},Z_{m+2}\right\rangle\leq0,\notag\\
&\sum_{\alpha\in\mathbb{N}_{2d}^n} \left(h_{0}^{i}\right)_{\alpha} y_{\alpha}+\left\langle A_{0}^{i},Z_{i} \right\rangle\leq0, \ i=1,\ldots,m,\notag\\
&\sum_{\alpha\in\mathbb{N}_{2d}^n} \left(h_{j}^{i}\right)_{\alpha} y_{\alpha}+\left\langle A_{j}^{i}, Z_{i}\right\rangle=0, \ i\in I, \ j=1,\ldots, s_{i},\notag\\
& \left\langle B_{\ell}^{i}, Z_{i} \right\rangle=0, \ i\in I, \ \ell=1,\ldots,p_{i},\notag\\
& \sum_{\alpha\in\mathbb{N}_{2d}^n}y_{\alpha} \mathcal{B}_{\alpha}\succeq0.\notag
\end{align}
It is worth noting that the problem \eqref{SDP} is also an SDP problem, which can be efficiently solved via interior point methods.
We now give the final result, which provides a way to extract an optimal solution of the problem~\eqref{FP} by solving the associated SDP problem~\eqref{SDP}, and the proof is motivated by~\cite[Theorem 4.2]{Chieu2018}.
Before that, we need the following common adopted assumption.
\begin{assumption}\label{assumption2}
For each $i\in I,$ there exist $\hat{y}^{i}\in\mathbb{R}^{s_{i}}$ and $\hat{z}^{i}\in\mathbb{R}^{p_{i}}$  such that $A_{0}^{i}+\sum_{j=1}^{s_{i}}\hat{y}_{j}^{i}A_{j}^{i}+\sum_{\ell=1}^{p_{i}}\hat{z}_{\ell}^{i}B_{\ell}^{i}\succ0.$
\end{assumption}

\begin{theorem}[\textbf{recovering optimal solutions from relaxations}]\label{thm2}
Suppose that Assumption~\ref{assumption1} and Assumption~\ref{assumption2} hold.
Then$,$ we have
\begin{equation*}
\inf\eqref{FP}=\sup\eqref{SDD} =\inf\eqref{SDP}.
\end{equation*}
%We further assume that \textcolor[rgb]{1.00,0.00,0.00}{$\argmin$\eqref{FP}$\neq\emptyset$} and
Moreover$,$ if $(\overline{y}, \overline{Z}_{1},\ldots,\overline{Z}_{m+2})$ is an optimal solution of the problem~\eqref{SDP} with $\overline{y}_0\neq0,$
then $\overline{x}:=\frac{1}{\overline{y}_0}(L_{\overline{y}}(x_{1}),\ldots,L_{\overline{y}}(x_{n}))$ is an optimal solution of the problem~\eqref{FP}.
\end{theorem}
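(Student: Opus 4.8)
The plan is to settle the three-term equality first and then to recover the minimizer by a Jensen-type argument resting on Lemma~\ref{lemma5}. The equality $\inf\eqref{FP}=\sup\eqref{SDD}$ is exactly Theorem~\ref{thm1} (which furthermore gives attainment, $\sup=\max$). For $\sup\eqref{SDD}=\inf\eqref{SDP}$ I would observe that \eqref{SDP} is the Lagrangian dual of the semidefinite program \eqref{SDD}: weak duality yields $\sup\eqref{SDD}\le\inf\eqref{SDP}$ directly, and the reverse inequality (absence of a duality gap, together with dual attainment) follows from the conic strong duality theorem once a Slater-type qualification is in force. Assumption~\ref{assumption2}, which furnishes a positive definite point $A_0^i+\sum_j\hat{y}_j^iA_j^i+\sum_\ell\hat{z}_\ell^iB_\ell^i\succ0$ for every spectrahedron $\Omega_i$, is precisely this qualification, and it also guarantees that the optimal tuple hypothesized in the ``moreover'' clause exists.

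For the recovery, write $\overline{\gamma}:=\inf\eqref{FP}=\inf\eqref{SDP}$ and let $(\overline{y},\overline{Z}_1,\ldots,\overline{Z}_{m+2})$ be optimal for \eqref{SDP} with $\overline{y}_0\neq0$. The last constraint of \eqref{SDP} forces $\sum_\alpha\overline{y}_\alpha\mathcal{B}_\alpha\succeq0$, whose $(0,0)$ entry is $\overline{y}_0\ge0$, so in fact $\overline{y}_0>0$; I would then normalize by $\tilde{y}:=\overline{y}/\overline{y}_0$ and $\tilde{Z}_i:=\overline{Z}_i/\overline{y}_0$, giving $\tilde{y}_0=1$, $\sum_\alpha\tilde{y}_\alpha\mathcal{B}_\alpha\succeq0$, and $\overline{x}=(L_{\tilde{y}}(x_1),\ldots,L_{\tilde{y}}(x_n))$. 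The engine of the argument is one inequality, valid for every $i\in I$ and every $\hat{y}^i\in\Omega_i$: pick a witness $\hat{z}^i$ with $A_0^i+\sum_j\hat{y}_j^iA_j^i+\sum_\ell\hat{z}_\ell^iB_\ell^i\succeq0$ and pair it with $\overline{Z}_i\succeq0$; the equality constraints $\langle A_j^i,\overline{Z}_i\rangle=-L_{\overline{y}}(h_j^i)$ and $\langle B_\ell^i,\overline{Z}_i\rangle=0$ of \eqref{SDP} then reduce the pairing to $\langle A_0^i,\overline{Z}_i\rangle\ge L_{\overline{y}}(\sum_j\hat{y}_j^ih_j^i)$.

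Feeding in the remaining scalar constraints of \eqref{SDP} now produces three estimates: for $i\le m$ the inequality $L_{\overline{y}}(h_0^i)+\langle A_0^i,\overline{Z}_i\rangle\le0$ gives $L_{\overline{y}}(h_0^i+\sum_j\hat{y}_j^ih_j^i)\le0$; the constraint carrying the ``$1$'' gives $L_{\overline{y}}(h_0^{m+2}+\sum_j\hat{y}_j^{m+2}h_j^{m+2})\le-1$; and, since the objective value equals $L_{\overline{y}}(h_0^{m+1})+\langle A_0^{m+1},\overline{Z}_{m+1}\rangle=\overline{\gamma}$, we get $L_{\overline{y}}(h_0^{m+1}+\sum_j\hat{y}_j^{m+1}h_j^{m+1})\le\overline{\gamma}$. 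Dividing through by $\overline{y}_0>0$ and applying Lemma~\ref{lemma5} to the SOS-convex polynomial $h_0^i+\sum_j\hat{y}_j^ih_j^i$ (of degree at most $2d$, with $\tilde{y}_0=1$ and $\sum_\alpha\tilde{y}_\alpha\mathcal{B}_\alpha\succeq0$) turns each $L_{\tilde{y}}(\cdot)$ into the corresponding value at $\overline{x}$; taking the supremum over $\hat{y}^i\in\Omega_i$ and using the representation of $f_i$ then yields $f_i(\overline{x})\le0$ for $i\le m$ (hence $\overline{x}\in K$), $f_{m+1}(\overline{x})\le\overline{\gamma}/\overline{y}_0$, and $-f_{m+2}(\overline{x})\ge1/\overline{y}_0>0$.

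To finish, I would combine the last two bounds: because $f_{m+1}(\overline{x})\ge0$ on $K$, we obtain $\frac{f_{m+1}(\overline{x})}{-f_{m+2}(\overline{x})}\le\frac{\overline{\gamma}/\overline{y}_0}{1/\overline{y}_0}=\overline{\gamma}=\inf\eqref{FP}$, whereas feasibility $\overline{x}\in K$ forces the reverse inequality, so $\overline{x}$ is optimal. The step I expect to cost the most care is the bookkeeping of the normalizing factor $\overline{y}_0$ across the three estimates, together with justifying the final ratio bound (which genuinely needs both $f_{m+1}(\overline{x})\ge0$ and $-f_{m+2}(\overline{x})>0$); a secondary point to verify is that the no-gap step $\sup\eqref{SDD}=\inf\eqref{SDP}$ truly rests on Assumption~\ref{assumption2} alone.
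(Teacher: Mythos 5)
Your second half --- the recovery of $\overline{x}$ --- is essentially the paper's own argument: the same normalization $\overline{z}:=\overline{y}/\overline{y}_0$, $\overline{W}_i:=\overline{Z}_i/\overline{y}_0$, the same pairing of an LMI witness for an arbitrary $\hat{y}^i\in\Omega_i$ against $\overline{Z}_i\succeq0$ via the equality constraints of \eqref{SDP}, the same use of Lemma~\ref{lemma5}, and the same closing ratio estimate; your observation that $\overline{y}_0>0$ follows from $\sum_{\alpha}\overline{y}_{\alpha}\mathcal{B}_{\alpha}\succeq0$ is a detail the paper leaves implicit. The genuine gap is in the first half, namely your justification of $\inf\eqref{SDP}\le\sup\eqref{SDD}$.

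Assumption~\ref{assumption2} is \emph{not} a Slater condition for the conic pair \eqref{SDD}--\eqref{SDP}: it asserts strict feasibility of the spectrahedral representations of the sets $\Omega_i$, i.e., of the \emph{inner} SDPs defining $f_i(x)=\sup_{y^i\in\Omega_i}\{h_0^i(x)+\sum_{j}y_j^ih_j^i(x)\}$, and it says nothing about strict feasibility of \eqref{SDD} or \eqref{SDP}. In fact \eqref{SDD} generically fails Slater in the Gram variable $X$: whenever some monomial $x^{2\beta}$ with $|\beta|=d$ occurs in none of the $h_j^i$, the equality constraints force $X_{\beta\beta}=0$ for every feasible $X$ (in the paper's first example, problem \eqref{FP0}, no $h_j^i$ contains $x_2^8$, so in the full basis $v_4$ every feasible $X$ is singular). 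Hence the conic strong duality theorem cannot be invoked as you propose, and the same objection undermines your side claim that Assumption~\ref{assumption2} guarantees attainment in \eqref{SDP}. What the paper does instead --- and this is where Assumption~\ref{assumption2} is actually used --- is prove $\inf\eqref{FP}\ge\inf\eqref{SDP}$ directly: fix a feasible $x$ of \eqref{FP} and set $\gamma=\frac{f_{m+1}(x)}{-f_{m+2}(x)}$; for each $i\in I$, Assumption~\ref{assumption2} is precisely the Slater condition for the inner SDP over $\Omega_i$ with $x$ frozen, so SDP strong duality \cite{Vandenberghe1995} yields $W_i\succeq0$ with $h_j^i(x)+\langle A_j^i,W_i\rangle=0$, $\langle B_\ell^i,W_i\rangle=0$ and the corresponding scalar inequalities; dividing by $-f_{m+2}(x)>0$ and setting $y:=\frac{1}{-f_{m+2}(x)}v_{2d}(x)$ and $Z_i:=\frac{1}{-f_{m+2}(x)}W_i$ produces a feasible point of \eqref{SDP} whose objective value is at most $\gamma$. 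This gives the chain $\inf\eqref{SDP}\le\inf\eqref{FP}=\sup\eqref{SDD}\le\inf\eqref{SDP}$ (the last step being your correct weak-duality observation), which closes the loop. To repair your proof, replace the appeal to conic strong duality for the outer pair by this construction.
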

\begin{proof}
Note that $\inf\eqref{FP}=\sup\eqref{SDD}$ holds by Theorem~\ref{thm1} (under Assumption~\ref{assumption1}).
To proof the second equality in the first argument, we will show that $\inf\eqref{FP}\geq\inf\eqref{SDP}$ and $\inf\eqref{SDP}\geq\sup\eqref{SDD}.$

First, we prove that $\inf\eqref{FP}\geq\inf\eqref{SDP}.$
To see this, let $x$ be any feasible solution of the problem~\eqref{FP}, and let $\gamma:=\frac{f_{m+1}(x)}{-f_{m+2}(x)}.$
Then, we have $f_{m+1}(x)+\gamma f_{m+2}(x)=0$ and $f_i(x)\leq0,$ $i=1,\ldots,m.$
Recall that
\begin{equation*}
f_{i}(x) = \sup_{\left(y_{1}^{i},\ldots,y_{s_{i}}^{i}\right) \in \Omega_{i}}\left\{h_{0}^{i}(x) +\sum^{s_{i}}_{j=1}y_{j}^{i}h_{j}^{i}(x)\right\}, \ i\in I.
\end{equation*}
So, by the definition of each $f_i,$ we see that
\begin{align*}
&f_{m+1}(x)+{\gamma}\left(h_{0}^{m+2}(x) +\sum^{s_{m+2}}_{j=1}y_{j}^{m+2}h_{j}^{m+2}(x)\right)\leq 0,\\
&h_{0}^{m+1}(x) +\sum^{s_{m+1}}_{j=1}y_{j}^{m+1}h_{j}^{m+1}(x)+{\gamma}f_{m+2}(x)\leq 0,
\end{align*}
and
\begin{equation*}
h_{0}^{i}(x) +\sum^{s_{i}}_{j=1}y_{j}^{i}h_{j}^{i}(x)\leq f_{i}(x)\leq0, \ i = 1, \ldots, m,
\end{equation*}
for all $(y_{1}^{i},\ldots,y_{s_{i}}^{i})\in\Omega_{i},$ $i\in I,$ where each compact set $\Omega_{i}$ is given by
\begin{equation*}
 \Omega_{i}:= \left\{(y_{1}^{i},\ldots,y_{s_{i}}^{i})\in \mathbb{R}^{s_{i}} : \exists z^{i}\in \mathbb{R}^{p_{i}} \textrm{ s.t. } A_{0}^{i} +\sum^{s_{i}}_{j=1}y_{j}^{i}A_{j}^{i} +\sum^{p_{i}}_{\ell=1}z_{\ell}^{i}B_{\ell}^{i} \succeq 0\right\}.
 \end{equation*}
By Assumption~\ref{assumption2} and the strong duality theorem for semidefinite programming \cite{Vandenberghe1995}, there exist $W_{i}\succeq0,$ $i\in I,$ such that
\begin{equation}\label{thm2rel1}
\left\{
\begin{array}{l}
h_{0}^{m+1}(x)+\langle A_{0}^{m+1},W_{m+1}\rangle+{\gamma}f_{m+2}(x) \leq0,\\
f_{m+1}(x)+{\gamma}\left(h_0^{m+2}(x)+\langle A_{0}^{m+2},W_{m+2}\rangle\right) \leq0,\\
h_{0}^{i}(x)+\langle A_{0}^{i},W_{i}\rangle\leq0, \ i=1,\ldots,m,\\
h_{j}^{i}(x)+\langle A_{j}^{i},W_{i}\rangle=0, \  i\in I, \ j=1,\ldots,s_{i},\\
\langle B_{\ell}^{i},W_{i}\rangle=0, \  i\in I, \ \ell=1,\ldots,p_{i}.
\end{array}
\right.
\end{equation}
Since $x$ is a feasible solution of the problem~\eqref{FP}, hence $-f_{m+2}(x)>0.$
Dividing all inequalities and equalities in \eqref{thm2rel1} by $-f_{m+2}(x)$ and letting $Z_i:=\frac{W_i}{-f_{m+2}(x)},$ $i\in I,$ \eqref{thm2rel1} can be rewritten as
\begin{equation}\label{thm2rel1-1}
\left\{
\begin{array}{l}
\frac{h_{0}^{m+1}(x)}{-f_{m+2}(x)}+\langle A_{0}^{m+1},Z_{m+1}\rangle \leq\gamma,\\
1+\frac{h_0^{m+2}(x)}{-f_{m+2}(x)}+\langle A_{0}^{m+2},Z_{m+2}\rangle \leq0,\\
\frac{h_{0}^{i}(x)}{-f_{m+2}(x)}+\langle A_{0}^{i},Z_{i}\rangle\leq0, \ i=1,\ldots,m,\\
\frac{h_{j}^{i}(x)}{-f_{m+2}(x)}+\langle A_{j}^{i},Z_{i}\rangle=0, \  i\in I, \ j=1,\ldots,s_{i},\\
\langle B_{\ell}^{i},Z_{i}\rangle=0, \  i\in I, \ \ell=1,\ldots,p_{i}.
\end{array}
\right.
\end{equation}
Let $y:=\frac{v_{d}(x)}{-f_{m+2}(x)}\in\mathbb{R}^{s(2d)}.$
Then, \eqref{thm2rel1-1} can be rewritten as follows:
\begin{equation*}
\left\{
\begin{array}{l}
\sum\limits_{\alpha\in\mathbb{N}_{2d}^n}(h_{0}^{m+1})_{\alpha} y_{\alpha}+\langle A_{0}^{m+1},Z_{m+1}\rangle \leq\gamma,\\
1+\sum\limits_{\alpha\in\mathbb{N}_{2d}^n}(h_{0}^{m+2})_{\alpha} y_{\alpha}+\langle A_{0}^{m+2},Z_{m+2}\rangle \leq0,\\
\sum\limits_{\alpha\in\mathbb{N}_{2d}^n}(h_{0}^{i})_{\alpha} y_{\alpha}+\langle A_{0}^{i},Z_{i}\rangle\leq0, \ i=1,\ldots,m,\\
\sum\limits_{\alpha\in\mathbb{N}_{2d}^n}(h_{j}^{i})_{\alpha} y_{\alpha}+\langle A_{j}^{i},Z_{i}\rangle=0, \  i\in I, \ j=1,\ldots,s_{i},\\
\langle B_{\ell}^{i},Z_{i}\rangle=0, \  i\in I, \ \ell=1,\ldots,p_{i}.
\end{array}
\right.
\end{equation*}
In addition, since $yy^{T} = \sum_{\alpha\in\mathbb{N}_{2d}^n}y_{\alpha} B_{\alpha}\succeq0,$
$(y, Z_{1},\ldots, Z_{m+2})$ is a feasible solution of the problem~\eqref{SDP}, and hence,
\begin{equation*}
\frac{f_{m+1}(x)}{-f_{m+2}(x)}=\gamma\geq \sum\limits_{\alpha\in\mathbb{N}_{2d}^n}(h_{0}^{m+1})_{\alpha} y_{\alpha}+\langle A_{0}^{m+1},Z_{m+1}\rangle\geq \inf\eqref{SDP}.
\end{equation*}
Since $x\in K$ is arbitrary, we have $\inf\eqref{FP}\geq \inf\eqref{SDP}.$

To complete the proof of the first argument, we now show that $\inf\eqref{SDP}\geq \sup\eqref{SDD}.$
Let $y\in\R^{s(2d)}$ and $Z_i\succeq0,$ $i\in I,$ be any feasible solution of the problem~\eqref{SDP},
and let $\lambda_{j}^{i}\in \R,$ $j=1,\ldots, s_i,$ $z_{\ell}^{i}\in\R,$ $\ell=1,\ldots,p_i,$ $i\in I,$ $X\succeq0$ be any feasible solution of the problem~\eqref{SDD}.
Then, we have
\begin{equation*}
\sum_{i=1}^{m+2}\left(\lambda_{0}^{i}\left(h_{0}^{i}\right)_{\alpha} +\sum^{s_{i}}_{j=1}\lambda_{j}^{i}\left(h_{j}^{i}\right)_{\alpha}\right)= \left\langle \mathcal{B}_{\alpha}, X \right\rangle, \ \alpha\in\mathbb{N}^{n}_{2d},
\end{equation*}
Multiplying it by $y_\alpha,$ $\alpha\in \N^n_{2d}$ and summing them, we have
{\small
\begin{align*}
0\leq \ & \left\langle \sum_{\alpha\in\mathbb{N}_{2d}^n}y_{\alpha} \mathcal{B}_{\alpha}, X\right\rangle\\
=\ &\sum_{\alpha\in\mathbb{N}_{2d}^n}\left(h_{0}^{m+1}\right)_{\alpha}y_{\alpha}+\lambda_{0}^{m+2}\sum_{\alpha\in\mathbb{N}_{2d}^n}\left(h_{0}^{m+2}\right)_{\alpha}y_{\alpha}+\sum_{i=1}^{m}\lambda_{0}^{i}\sum\limits_{\alpha\in\mathbb{N}_{2d}^n}\left(h_{0}^{i}\right)_{\alpha}y_{\alpha} +\sum_{i=1}^{m+2}\sum^{s_{i}}_{j=1}\lambda_{j}^{i}\sum\limits_{\alpha\in\mathbb{N}_{2d}^n}\left(h_{j}^{i}\right)_{\alpha}y_{\alpha}\\
\leq \ & \sum_{\alpha\in\mathbb{N}_{2d}^n}\left(h_{0}^{m+1}\right)_{\alpha}y_{\alpha}+\lambda_{0}^{m+2}\left(-1-\left\langle A_{0}^{m+2}, Z_{m+2}\right\rangle\right)-\sum_{i=1}^{m}\lambda_{0}^{i}\left\langle A_{0}^{i}, Z_{i}\right\rangle-\sum_{i=1}^{m+2}\sum^{s_{i}}_{j=1}\lambda_{j}^{i}\left\langle A_{j}^{i}, Z_{i}\right\rangle\\
= \ & \sum_{\alpha\in\mathbb{N}_{2d}^n}\left(h_{0}^{m+1}\right)_{\alpha}y_{\alpha}-\lambda_{0}^{m+2}-\left\langle \lambda_{0}^{m+2}A_{0}^{m+2}+\sum^{s_{m+2}}_{j=1}\lambda_{j}^{m+2} A_{j}^{m+2}, Z_{m+2}\right\rangle-\left\langle \sum^{s_{m+1}}_{j=1}\lambda_{j}^{m+1} A_{j}^{m+1}, Z_{m+2}\right\rangle\\
&\ -\sum_{i=1}^{m}\left\langle \lambda_{0}^{i}A_{0}^{i}+\sum^{s_{i}}_{j=1}\lambda_{j}^{i}A_{j}^{i}, Z_{i}\right\rangle\\
\leq \ & \sum_{\alpha\in\mathbb{N}_{2d}^n}\left(h_{0}^{m+1}\right)_{\alpha}y_{\alpha}-\lambda_{0}^{m+2}+\left\langle \sum^{p_{m+2}}_{\ell=1}z_{\ell}^{m+2}B_{\ell}^{m+2}, Z_{m+2}\right\rangle+\left\langle A_{0}^{m+1}+\sum^{p_{m+1}}_{\ell=1}z_{\ell}^{m+1}B_{\ell}^{m+1}, Z_{m+1}\right\rangle\\
&\ + \sum_{i=1}^{m}\left\langle \sum^{p_{i}}_{\ell=1}z_{\ell}^{i}B_{\ell}^{i}, Z_{i}\right\rangle\\
= & \sum_{\alpha\in\mathbb{N}_{2d}^n}\left(h_{0}^{m+1}\right)_{\alpha}y_{\alpha}+\left\langle A_{0}^{m+1}, Z_{m+1}\right\rangle-\lambda_{0}^{m+2},
\end{align*}}
i.e., $\sum\limits_{\alpha\in\mathbb{N}_{2d}^n}\left(h_{0}^{m+1}\right)_{\alpha}y_{\alpha}+\left\langle A_{0}^{m+1}, Z_{m+1}\right\rangle\geq \lambda_{0}^{m+2}.$
So, we have $\inf\eqref{SDP}\geq \sup\eqref{SDD}.$

To prove the second argument, let $(\overline{y}, \overline{Z}_{1},\ldots,\overline{Z}_{m+2})$ be an optimal solution of the problem~\eqref{SDP} and assume that $\overline{y}_0\neq0.$
Then, we have
\begin{align*}
&1+ \sum\limits_{\alpha\in\mathbb{N}_{2d}^n}\left(h_{0}^{m+2}\right)_{\alpha} \overline{y}_{\alpha}+\left\langle A_{0}^{m+2},\overline{Z}_{m+2}\right\rangle\leq0,\\
&\sum\limits_{\alpha\in\mathbb{N}_{2d}^n}(h_{0}^{i})_{\alpha} \overline{y}_{\alpha}+\langle A_{0}^{i},\overline{Z}_{i}\rangle\leq0, \ i=1,\ldots,m,\\
&\sum\limits_{\alpha\in\mathbb{N}_{2d}^n}(h_{j}^{i})_{\alpha} \overline{y}_{\alpha}+\langle A_{j}^{i},\overline{Z}_{i}\rangle=0, \  i\in I, \ j=1,\ldots,s_{i},\\
&\langle B_{\ell}^{i},\overline{Z}_{i}\rangle=0, \  i\in I, \ \ell=1,\ldots,p_{i},\\
&\sum_{\alpha\in\mathbb{N}_{2d}^n}\overline{y}_{\alpha} \mathcal{B}_{\alpha}\succeq0.
\end{align*}
Putting $\overline{z}_{\alpha}:=\frac{\overline{y}_{\alpha}}{\overline{y}_{0}},$ $\alpha\in\N^{n}_{2d},$ and $\overline{W}_i:=\frac{\overline{Z}_{i}}{\overline{y}_{0}},$ we have
\begin{align}
&\frac{1}{\overline{y}_{0}}+ \sum\limits_{\alpha\in\mathbb{N}_{2d}^n}\left(h_{0}^{m+2}\right)_{\alpha} \overline{z}_{\alpha}+\left\langle A_{0}^{m+2},\overline{W}_{m+2}\right\rangle\leq0,\label{thm2rel9}\\
&\sum\limits_{\alpha\in\mathbb{N}_{2d}^n}(h_{0}^{i})_{\alpha} \overline{z}_{\alpha}+\langle A_{0}^{i},\overline{W}_{i}\rangle\leq0, \ i=1,\ldots,m,\label{thm2rel2}\\
&\sum\limits_{\alpha\in\mathbb{N}_{2d}^n}(h_{j}^{i})_{\alpha} \overline{z}_{\alpha}+\langle A_{j}^{i},\overline{W}_{i}\rangle=0, \  i\in I, \ j=1,\ldots,s_{i},\label{thm2rel3}\\
&\langle B_{\ell}^{i},\overline{W}_{i}\rangle=0, \  i\in I, \ \ell=1,\ldots,p_{i},\label{thm2rel4}\\
&\sum_{\alpha\in\mathbb{N}_{2d}^n}\overline{z}_{\alpha} \mathcal{B}_{\alpha}\succeq0, \  \overline{z}_{0}=1.\label{thm2rel5}
\end{align}
Let $(y_{1}^{i},\ldots,y_{s_{i}}^{i})\in\Omega_{i},$ $i\in I,$ be any given.
Then, for each $i\in I,$ there exists $(z_{1}^{i},\ldots,z_{p_{i}}^{i})\in\mathbb{R}^{p_{i}}$ such that
\begin{equation*}
A_{0}^{i} +\sum^{s_{i}}_{j=1}y_{j}^{i}A_{j}^{i} +\sum^{p_{i}}_{\ell=1}z_{\ell}^{i}B_{\ell}^{i} \succeq 0.
\end{equation*}
It follows from (\ref{thm2rel2}) that for each $i=1,\ldots,m,$
\begin{align}
0\geq\sum\limits_{\alpha\in\mathbb{N}_{2d}^n}(h_{0}^{i})_{\alpha} \overline{z}_{\alpha}+\langle A_{0}^{i},\overline{W}_{i}\rangle
\geq\ &\sum\limits_{\alpha\in\mathbb{N}_{2d}^n}(h_{0}^{i})_{\alpha} \overline{z}_{\alpha}-\sum^{s_{i}}_{j=1}y_{j}^{i}\langle A_{j}^{i},\overline{W}_{i}\rangle -\sum^{p_{i}}_{\ell=1}z_{\ell}^{i}\langle B_{\ell}^{i},\overline{W}_{i}\rangle\notag\\
= \ &\sum\limits_{\alpha\in\mathbb{N}_{2d}^n}(h_{0}^{i})_{\alpha} \overline{z}_{\alpha}+\sum^{s_{i}}_{j=1}y_{j}^{i}\sum\limits_{\alpha\in\mathbb{N}_{2d}^n}(h_{j}^{i})_{\alpha} \overline{z}_{\alpha}\notag\\
= \ &L_{\overline{z}}\left(h_{0}^{i}+\sum_{j=1}^{s_{i}}y_{j}^{i}h_{j}^{i}\right),\label{thm2rel6}
\end{align}
where the first equality follows from (\ref{thm2rel3}) and (\ref{thm2rel4}).
Note that for each $i=1,\ldots,m,$ $h_{0}^{i}+\sum_{j=1}^{s_{i}}y_{j}^{i}h_{j}^{i}$ is SOS-convex.
Since $\overline{z}$ satisfies (\ref{thm2rel5}), by Lemma~\ref{lemma5}, we see that for each $i=1,\ldots,m,$
\begin{equation*}
L_{\overline{z}}\left(h_{0}^{i}+\sum_{j=1}^{s_{i}}y_{j}^{i}h_{j}^{i}\right)\geq \left(h_{0}^{i}+\sum_{j=1}^{s_{i}}y_{j}^{i}h_{j}^{i}\right)(L_{\overline{z}}(x_{1}),\ldots,L_{\overline{z}}(x_{n}))=h_{0}^{i}(\overline{x})+\sum_{j=1}^{s_{i}}y_{j}^{i}h_{j}^{i}(\overline{x}).
\end{equation*}
This, together with (\ref{thm2rel6}), yields that for each $i=1,\ldots,m,$
\begin{equation}\label{thm2rel7}
0\geq \sup_{(y_{1}^{i},\ldots,y_{s_{i}}^{i})\in\Omega_{i}}\left\{h_{0}^{i}(\overline{x})+\sum_{j=1}^{s_{i}}y_{j}^{i}h_{j}^{i}(\overline{x})\right\}=f_{i}(\overline{x})
\end{equation}
as each $(y_{1}^{i},\ldots,y_{s_{i}}^{i})\in\Omega_{i}$ is arbitrary.
So, $\overline{x}$ is a feasible solution of the problem~\eqref{FP}.
Moreover, by similar arguments as above into \eqref{thm2rel9}, we have
\begin{equation*}
\frac{1}{\overline{y}_{0}}\leq -\sum\limits_{\alpha\in\mathbb{N}_{2d}^n}\left(h_{0}^{m+2}\right)_{\alpha} \overline{z}_{\alpha}-\left\langle A_{0}^{m+2},\overline{W}_{m+2}\right\rangle\leq -L_{\overline{z}}\left(h_{0}^{m+2}+\sum_{j=1}^{s_{m+2}}y_{j}^{m+2}h_{j}^{m+2}\right)\leq-f_{m+2}(\overline{x}),
\end{equation*}
i.e., $\overline{y}_0\geq \frac{-1}{f_{m+2}(\overline{x})}.$
This implies that
\begin{align*}
\inf\eqref{SDP}=\sum_{\alpha\in\mathbb{N}^{n}_{2d}} \left(h_{0}^{m+1}\right)_{\alpha} \overline{y}_{\alpha}+\left\langle A_{0}^{m+1},\overline{Z}_{m+1}\right\rangle
=& \ \overline{y}_0\left(\sum_{\alpha\in\mathbb{N}^{n}_{2d}} \left(h_{0}^{m+1}\right)_{\alpha} \overline{z}_{\alpha}+\left\langle A_{0}^{m+1},\overline{W}_{m+1}\right\rangle\right)\\
\geq& \ \overline{y}_0L_{\overline{z}}\left(h_{0}^{m+1}+\sum_{j=1}^{s_{m+1}}y_{j}^{m+1}h_{j}^{m+1}\right)\\
\geq& \ \overline{y}_0f_{m+1}(\overline{x})\geq \frac{-f_{m+1}(\overline{x})}{f_{m+2}(\overline{x})}\\
\geq& \ \inf\eqref{FP}=\inf\eqref{SDP}.
\end{align*}
Thus, $\overline{x}:=\frac{1}{\overline{y}_0}(L_{\overline{y}}(x_{1}),\ldots,L_{\overline{y}}(x_{n}))$ is an optimal solution of the problem~\eqref{FP}.
\end{proof}

%\begin{remark}{\rm
%Note that in Theorem~\ref{thm2}, we assume that $\overline y_0 \not = 0,$ under which an optimal solution of the problem~\eqref{FP} can be extracted.
%However, it seems that the assumption $\overline y_0 \not = 0$ always holds, and can be removed, while we cannot prove this.
%}\end{remark}

\subsection{Illustrative examples}
In this subsection, we give two examples to illustrate our findings.
The first example has been calculated in \cite{Jiao2019b} by using the Dinkelbach method.
Below, we re-examine it by using Theorem~\ref{thm2}.
\begin{example}[{see \cite[Example 3.6]{Jiao2019b}}]{\rm
Consider the following fractional program,
\begin{align}\label{FP0}
\min\limits_{x\in\mathbb{R}^{2}}\ \left\{\frac{f_{2}(x)}{-f_{3}(x)} \colon f_{1}(x)\leq0\right\}, \tag{${{\rm FP}_{1}}$}
\end{align}
where
\begin{align*}
f_1(x_1, x_2) \ & = x_1^2+x_1x_2+x_2^2+4x_1+1 + s((x_1,x_2)\mid \Gamma_1), \\
f_2(x_1, x_2) \ & = x_1^8+x_1^2+x_1x_2+x_2^2 + s((x_1,x_2)\mid \Gamma_2), \\
f_3(x_1, x_2) \ & = (x_1+x_2)^2 - 10 + s((x_1,x_2)\mid \Gamma_3),
\end{align*}
here, $\Gamma_i=[-1,1]^2,$ $i=1,2,3,$ and $s((x_1,x_2)\mid \Gamma_i)$ stands for support functions, defined as
\begin{equation*}
s((x_1,x_2)\mid \Gamma_i)=\max_{u_i\in \Gamma_i}\langle u_i,x\rangle=|x_1|+|x_2|, \ i=1,2,3,
\end{equation*}
Note that the functions $f_i,$ $i=1,2,3,$ are SOS-convex semi-algebrac, which can be easily checked.
Let $K_1:=\{(x_1,x_2) : f_1(x_1,x_2)\leq 0\}$ be the feasible set of the problem~\eqref{FP0}.
By \cite[Example 3.6]{Jiao2019b}, we know the optimal solution and optimal value are $(\overline x_1, \overline x_2) = (0.3820, 0) \approx (\frac{-3+\sqrt{5}}{2}, 0)$ and $0.0558,$ respectively.

In order to match the notation used in the function~\eqref{function f_{i}}.
We let
$h_{0}^{1}(x)= x_1^2+x_1x_2+x_2^2+4x_1+1,$ $h_{j}^{1}(x)=x_{j},$ $j=1,2,$ $h_{0}^{2}(x)=x_1^8+x_1^2+x_1x_2+x_2^2,$ $h_{j}^{2}(x)=x_{j},$ $j=1,2,$ $h_{0}^{3}(x)=(x_1+x_2)^2 - 10,$ $h_{j}^{3}(x)=x_{j},$ $j=1,2,$  and each $\Omega_{j}$ is given by
\begin{align*}
	\Omega_{j}= \ &\left\{(y_{1}^{j},y_{2}^{j})\in\mathbb{R}^{2} : I_{4}+y_{1}^{j}\left(\begin{array}{rrrr}  -1 & 0 & 0 & 0 \\  0 & 1 & 0 & 0\\ 0 & 0 & 0 & 0\\ 0 & 0 & 0 & 0\\ \end{array}\right)+y_{2}^{j}\left(\begin{array}{rrrr}  0 & 0 & 0 & 0 \\  0 & 0 & 0 & 0\\ 0 & 0 & -1 & 0\\ 0 & 0 & 0 & 1\\ \end{array}\right)\succeq0\right\},\ j=1,2,3,
\end{align*}
where $I_4$ denotes the identity matrix of order $4.$
By simple calculations, we see that $\Omega_{j}=\{(y_{1}^{j},y_{2}^{j})\in\mathbb{R}^{2} : -1\leq y_{1}^{j}\leq1, \ -1\leq y_{2}^{j}\leq1\},j=1,2,3. $
By the results, we also see that
$f_{1}(x)= x_1^2+x_1x_2+x_2^2+4x_1+|x_{1}|+|x_{2}|+1,$ $f_{2}(x)=x_1^8+x_1^2+x_1x_2+x_2^2+|x_{1}|+|x_{2}|,$ and $f_{3}(x)=(x_1+x_2)^2+|x_{1}|+|x_{2}|-10.$

Now, we consider the following sum of squares relaxation dual problem for the problem~\eqref{FP1}:
\begin{align*}
	\sup\limits_{\substack{\lambda_{0}^{i}, \lambda_{j}^{i}}} \ \ & \lambda_{0}^{3}  \\
	\textrm{s.t.}\quad &\lambda_{0}^{1}h_{0}^{1}+\sum^{2}_{j=1}\lambda_{j}^{1}h_{j}^{1}+\lambda_{0}^{2}h_{0}^{2}+\sum^{2}_{j=1}\lambda_{j}^{2}h_{j}^{2}+\lambda_{0}^{3}h_{0}^{3}+\sum^{2}_{j=1}\lambda_{j}^{3}h_{j}^{3}\in\Sigma^{2}_{8},\\
	&{\rm diag}\left( \lambda_{0}^{1}-\lambda_{1}^{1},\  \lambda_{0}^{1}+\lambda_{1}^{1}, \  \lambda_{0}^{1}-\lambda_{2}^{1},\ \lambda_{0}^{1}+\lambda_{2}^{1}\right)\succeq0,\\
	&{\rm diag}\left( \lambda_{0}^{2}-\lambda_{1}^{2},\  \lambda_{0}^{2}+\lambda_{1}^{2},\  \lambda_{0}^{2}-\lambda_{2}^{2},\ \lambda_{0}^{2}+\lambda_{2}^{2} \right)\succeq0,\\
	&{\rm diag}\left( \lambda_{0}^{3}-\lambda_{1}^{3}, \  \lambda_{0}^{3}+\lambda_{1}^{3}, \  \lambda_{0}^{3}-\lambda_{2}^{3},\ \lambda_{0}^{3}+\lambda_{2}^{3}\right)\succeq0,\\
	&\lambda_{0}^{2}=1, \ \lambda_{0}^{i}\geq0,\ i=1,3, \ \lambda_{j}^{1}\in\mathbb{R}, \ \lambda_{j}^{2}\in\mathbb{R}, \ \lambda_{j}^{3}\in\mathbb{R}, \  j=1,2,
\end{align*}
where ${\rm diag}(w)$ stands for a square diagonal matrix with the elements of vector $w$ on the diagonal.
Invoking Proposition~\ref{proposition1}, there exists $X\in S_{+}^{s(4)}(=S_{+}^{15})$ such that
\begin{align}
	&\lambda_{0}^{1}h_{0}^{1}(x)+\sum^{2}_{j=1}\lambda_{j}^{1}h_{j}^{1}(x)+\lambda_{0}^{2} h_{0}^{2}(x)+\sum^{2}_{j=1}\lambda_{j}^{2}h_{j}^{2}(x)+\lambda_{0}^{3}h_{0}^{3}(x)+\sum^{2}_{j=1}\lambda_{j}^{3}h_{j}^{3}(x) \label{ex0rel1}\\
	=\ &\langle v_{4}(x)v_{4}(x)^{T},X\rangle, \  \forall x\in\mathbb{R}^{2}.\notag
\end{align}
Thanks to \cite[Theorem 1]{Reznick1978}, we can reduce the size of $v_{4}(x),$ that is, 6, and so $X \in S^6_{+}.$
In more detail, $v_{4}(x)=(1,x_{1},x_{2},x_{1}^{2},x_{1}^{3},x_{1}^{4})^{T}$ in $(\ref{ex0rel1}).$
With this fact we formulate the following SDP dual problem for the problem~\eqref{FP0}:
\begin{align}\label{SDD0}
	\sup\limits_{\substack{\lambda_{j}^{i},X}} \ \ & \lambda_{0}^{3} \tag{$\widehat{\rm Q}_{1}$}\\
	\textrm{s.t.}\ \ \,   & \lambda^{1}_{0}-10\lambda^{3}_{0}=X_{11}, \,\lambda^{1}_{1}+\lambda^{2}_{1}+\lambda^{3}_{1}+4\lambda^{1}_{0}=2X_{12}, \, \lambda_{2}^{1}+\lambda^{2}_{2}+\lambda^{3}_{2}=2X_{13},\notag \\
	&\lambda^{1}_{0}+\lambda^{2}_{0}+\lambda^{3}_{0}=2X_{14}+X_{22}, \,\lambda^{1}_{0}+\lambda^{2}_{0}+2\lambda^{3}_{0}=2X_{23}, \,\lambda^{1}_{0}+\lambda^{2}_{0}+\lambda^{3}_{0}=X_{33}, \notag \\
	& 0=2X_{16}+2X_{25}+X_{44}, \,\lambda^{2}_{0}=X_{66}, \,0=X_{15}+X_{24}=X_{34}=X_{35},\notag \\
	&0=X_{26}+X_{45}=X_{36}=2X_{46}+X_{55}=X_{56}, \notag \\
		&{\rm diag}\left(\lambda_{0}^{1}-\lambda_{1}^{1},\  \lambda_{0}^{1}+\lambda_{1}^{1},\  \lambda_{0}^{1}-\lambda_{2}^{1},\ \lambda_{0}^{1}+\lambda_{2}^{1}\right)\succeq0,\notag\\
	&{\rm diag}\left(\lambda_{0}^{2}-\lambda_{1}^{2},\  \lambda_{0}^{2}+\lambda_{1}^{2},\  \lambda_{0}^{2}-\lambda_{2}^{2},\ \lambda_{0}^{2}+\lambda_{2}^{2}\right)\succeq0,\notag\\
	&{\rm diag}\left(\lambda_{0}^{3}-\lambda_{1}^{3},\  \lambda_{0}^{3}+\lambda_{1}^{3},\  \lambda_{0}^{3}-\lambda_{2}^{3},\ \lambda_{0}^{3}+\lambda_{2}^{3}\right)\succeq0,\notag\\
	&\lambda_{0}^{2}=1, \ \lambda_{0}^{i}\geq0,\ i=1,3, \ \lambda_{j}^{1}\in\mathbb{R}, \ \lambda_{j}^{2}\in\mathbb{R}, \ \lambda_{j}^{3}\in\mathbb{R}, \  j=1,2.\notag
\end{align}
%Solving the problem~\eqref{SDD0} using CVX \cite{Grant2013} in MATLAB give us its optimal value $\overline{\lambda}_{0}^{4}=0.0558.$
%It follows from Theorem~\ref{thm1} that $\overline{\gamma}:=\overline{\lambda}_{0}^{4}=0.0558$ is the optimal value of the problem~\eqref{FP1}.
%{\color{blue}
To proceed, we formulate the following Lagrangian dual problem of the problem~\eqref{SDD0}:
\leqnomode
\begin{align}\label{SDP0}
	\inf\limits_{\substack{y\in\mathbb{R}^{15}\\Z_{i}\succeq0}} \ \ & \sum_{\alpha\in\mathbb{N}^{n}_{2d}} y_{80}+y_{20}+y_{11}+y_{02}+\langle A_{0}^{2},Z_{2}\rangle\tag{${\rm Q}_{2}$}\\
	{\rm s.t.}\ \ \ &1+y_{20}+y_{02}+2y_{11}-10y_{00}+\langle A_{0}^{3},Z_{3}\rangle\leq0,\notag\\
	&y_{20}+y_{11}+y_{02}+4y_{10}+y_{00}+\langle A_{0}^{1},Z_{1}\rangle\leq0,\notag\\
	&y_{10}+\langle A_{1}^{1},Z_{1}\rangle=0, \ y_{01}+\langle A_{2}^{1},Z_{1}\rangle=0,\notag\\
	&y_{10}+\langle A_{1}^{2},Z_{2}\rangle=0, \ y_{01}+\langle A_{2}^{2},Z_{2}\rangle=0,\notag\\
	&y_{10}+\langle A_{1}^{3},Z_{3}\rangle=0, \ y_{01}+\langle A_{2}^{3},Z_{3}\rangle=0,\notag\\
	& \sum_{\alpha}y_{\alpha} \mathcal{B}_{\alpha}\succeq0.\notag
\end{align}
\reqnomode
Solving the problem \eqref{SDP0} using CVX \cite{Grant2013} in MATLAB, we obtain the optimal value $0.0558$ and an optimal solution $(\overline y,\overline{Z}_{1},\overline{Z}_{2}, \overline{Z}_{3})$ for the problem~\eqref{SDP0}, where
\begin{align*}
	&\begin{array}{rrr}
		\overline{y}=(0.1056, -0.0403, 0.0000, 0.0154, -0.0000, 0.0000, -0.0059, 0.0000, \\
            0.0022, -0.0000, 0.0009, 0.0000, 0.0003, -0.0001, 0.0000 )^{T},
	\end{array}\\
	&\overline{Z}_{1}\approx{\rm diag}(0.0000, 0.0403, 0.0000, 0.0000),\\
	&\overline{Z}_{2}\approx{\rm diag}(0.0000, 0.0403, 0.0000, 0.0000),\\
	&\overline{Z}_{3}\approx{\rm diag}(0.0000, 0.0403, 0.0000, 0.0000).
\end{align*}
Thus, it follows from Theorem~\ref{thm2} that $\overline{x}=(-0.3820,0)$ is an optimal solution of the problem~\eqref{FP1}.
In a result, we see that the obtained results by solving one SDP problem due to
Theorem~\ref{thm2} are same to the results in \cite{Jiao2019b}.
\qed
}\end{example}

Below, we give another example to show how to find an optimal solution of the problem~\eqref{FP} by solving one SDP problem due to Theorem~\ref{thm2}.

\begin{example}{\rm
Consider the following fractional program,
\begin{align}\label{final}
\min\limits_{x\in\mathbb{R}^{2}}\left\{ \frac{\sqrt{x^TQx}}{\underset{\left (a_1, a_2\right)\,\in\, \Gamma}{\inf} \left\{a_1x_1+a_2x_2-b\right\} } \colon  f_{1}(x)\leq0\right\}, %\tag{${{\rm FP}_{2}}$}
\end{align}
where $Q$ is a positive definite matrix given by
\begin{align*}
Q=\left(
    \begin{array}{cc}
      10 & 4 \\
      4 & 2 \\
    \end{array}
  \right),
\end{align*}
the set $\Gamma$ is given by
\begin{align*}
\Gamma &= \left\{(a_{1},a_{2})\in\mathbb{R}^{2} \colon \left(\begin{array}{rrrr}  1 & 0 & 0 & 0 \\  0 & 1 & 0 & 0 \\ 0 & 0 & 1 & 0 \\ 0 & 0 & 0 & 1 \\  \end{array}\right)+a_{1}\left(\begin{array}{rrrr}  -1 & 0 & 0 & 0  \\  0 & 1 & 0 & 0 \\ 0 & 0 & 0 & 0 \\ 0 & 0 & 0 & 0 \\  \end{array}\right)
+a_2\left(\begin{array}{rrrr}  0 & 0 & 0 & 0 \\  0 & 0 & 0 & 0 \\ 0 & 0 & -1 & 0 \\ 0 & 0 & 0 & 1 \\ \end{array}\right) \succeq0\right\},
\end{align*}
$b=a_1+a_2-3,$ and $f_{1} =  (x_{1}-2)^{2}+ (x_{2}-1)^{2}-1.$
Observe that the problem~\eqref{final} can be equivalently reformulated as follows,
\begin{align}\label{FP1}
\min\limits_{x\in\mathbb{R}^{2}}\left\{ \frac{\left \| Q^\frac{1}{2}x  \right \|_2 }{-\underset{\left ( a_1,a_2 \right )\, \in\, \Gamma}{\sup} \left\{-a_1x_1-a_2x_2+b\right\}} \colon f_{1}(x)\leq0 \right\}, \tag{${{\rm FP}_{2}}$}
\end{align}
where $Q^\frac{1}{2} := \begin{pmatrix}
					3 & 1\\
					1 & 1
				\end{pmatrix}.$
In order to match the notation used in the problem~\eqref{FP1}, we denote by
\begin{align*}
f_{1}(x)= \ &  (x_{1}-2)^{2}+ (x_{2}-1)^{2}-1 = h_{0}^{1}(x),  \notag\\
f_{2}(x)=\ & \left \| Q^\frac{1}{2}x  \right \|_2 = \sup_{(y_{1}^{2},y_{2}^{2})\in\Omega_2}\bigg\{h_{0}^{2}(x)+y_{1}^{2}h_{1}^{2}(x)+y_{2}^{2}h_{2}^{2}(x)\bigg\}, \notag\\
f_{3}(x)=\ & \underset{\left(a_1,a_2 \right)\, \in\, \Gamma}{\sup}\left\{-a_1x_1-a_2x_2+a_1+a_2-3 \right\} = \sup_{(a_1,a_2)\,\in\, \Gamma}\bigg\{h_{0}^{3}(x)+a_1h_{1}^{3}(x)+a_2h_{2}^{3}(x)\bigg\}, \notag
\end{align*}
where
$h_{0}^{1}(x)=(x_{1}-2)^{2}+ (x_{2}-1)^{2}-1,$  $h_{0}^{2}(x)=0,$ $h_{1}^{2}(x)=3x_{1}+x_{2},$
$h_{2}^{2}(x)=x_{1}+x_{2},$ $h_{0}^{3}(x)=-3,$ $h_{1}^{3}(x)=1-x_{1},$ $h_{2}^{3}(x)=1-x_{2},$  and $\Omega_2$ is given by
\begin{align*}
\Omega_{2}= \ &\left\{(y_{1}^{2},y_{2}^{2})\in\mathbb{R}^{2} \colon I_{3}+y_{1}^{2}\left(\begin{array}{rrr}   0 & 0 & 1 \\  0 & 0 & 0\\  1 & 0 & 0\\ \end{array}\right)+y_{2}^{2}\left(\begin{array}{rrr} 0 & 0 & 0 \\  0 & 0 & 1\\ 0 & 1 & 0\\ \end{array}\right)\succeq0\right\},
\end{align*}
where $I_{3}$ denotes the identity matrix of order $3.$
Then, we easily see that for each $(y_{1}^{2},y_{2}^{2})\in \Omega_2,$ each $(a_1,a_2)\in \Gamma,$ the functions $h_{0}^{2}(x)+y_{1}^{2}h_{1}^{2}(x)+y_{2}^{2}h_{2}^{2}(x)$ and $h_{0}^{3}(x)+a_{1}h_{1}^{3}(x)+a_{2}h_{2}^{3}(x)$ are SOS-convex polynomials, and so, for each $i=1,2,3,$ $f_{i}$ is an SOS-convex semi-algebraic function.
In addition, by simple calculations, we see that
$\Omega_2=\{(y_{1}^{2},y_{2}^{2})\in\mathbb{R}^{2} \colon (y_{1}^{2})^{2}+(y_{2}^{2})^{2}\leq1 \}$ and
$\Gamma=\left\{(a_{1},a_{2}) \in \mathbb{R}^2 \colon -1\leq a_{1}\leq1\ , -1\leq a_{2}\leq1 \right\}.$

Let $\hat{x}=(0,0).$
Then, it can be verified that $f_{1}(\hat{x})=-1<0$ and so, the Slater condition is satisfied, which means Assumption~\ref{assumption1} is satisfied.
Moreover, by letting $(\hat{y}_{1}^{2},\hat{y}_{2}^{2})=(0,0),$  and $(\hat{a}_{1},\hat{a}_{2})=(0,0),$ we see that Assumption~\ref{assumption2} also holds.
			
We now consider the following sum of squares relaxation dual problem for the problem~\eqref{FP1}:
\begin{align*}
\sup\limits_{\substack{\lambda_{0}^{i}, \lambda_{j}^{i}}} \ \ & \lambda_{0}^{3}  \\
\textrm{s.t.}\quad &\lambda_{0}^{3}h_{0}^{3}+\sum^{2}_{j=1}\lambda_{j}^{3}h_{j}^{3}+\lambda_{0}^{1}h_{0}^{1} +\lambda_{0}^{2}h_{0}^{2}+\sum^{2}_{j=1}\lambda_{j}^{2}h_{j}^{2}\in\Sigma^{2}_{2},\\
				& \left(\begin{array}{ccc} \lambda_{0}^{2} & 0 & \lambda_{1}^{2} \\  0 & \lambda_{0}^{2} & \lambda_{2}^{2} \\ \lambda_{1}^{2} & \lambda_{2}^{2} & \lambda_{0}^{2} \\ \end{array}\right)\succeq0,\\
				%& \left(\begin{array}{cccc} \lambda_{0}^{3}-\lambda_{1}^{3} & 0 & 0 & 0  \\ 0 & \lambda_{0}^{3}+\lambda_{1}^{3} & 0 & 0  \\ 0 & 0 & \lambda_{0}^{3}-\lambda_{2}^{3} & 0  \\  0 & 0 & 0 & \lambda_{0}^{3}+\lambda_{2}^{3} \\ \end{array}\right)\succeq0,\\
	&{\rm diag}\left(\lambda_{0}^{3}-\lambda_{1}^{3},\  \lambda_{0}^{3}+\lambda_{1}^{3},\  \lambda_{0}^{3}-\lambda_{2}^{3},\ \lambda_{0}^{3}+\lambda_{2}^{3}\right)\succeq0,\\
				&\lambda_{0}^{2}=1, \ \lambda_{0}^{i}\geq0,\ i=1,3,\ \lambda_{j}^{2}\in\mathbb{R}, \  \lambda_{j}^{3}\in\mathbb{R}, \  j=1,2.
\end{align*}
By using Proposition~\ref{proposition1}, there exists $X\in S_{+}^{s(1)}(=S_{+}^{3})$ such that
\begin{equation*}
\lambda_{0}^{3}h_{0}^{3}+\sum^{2}_{j=1}\lambda_{j}^{3}h_{j}^{3}+\lambda_{0}^{1}h_{0}^{1} +\lambda_{0}^{2}h_{0}^{2}+\sum^{2}_{j=1}\lambda_{j}^{2}h_{j}^{2} =\langle v_{1}(x)v_{1}(x)^{T},X\rangle, \  \forall x\in\mathbb{R}^{2},%\label{ex1rel1}
\end{equation*}
where $v_{1}(x)=(1,x_{1},x_{2})^{T}$. % in \eqref{ex1rel1}.
Now, we formulate the following SDP dual problem for the problem~\eqref{FP1}:
			\begin{align}\label{SDD1}
				\sup\limits_{\substack{\lambda_{j}^{i},X}} \ \ & \lambda_{0}^{3} \tag{$\widehat{\rm Q}_{1}$}\\
				\textrm{s.t.}\ \ \,   & 4\lambda_{0}^{1}+3\lambda^{3}_{0}+\lambda^{3}_{1}+\lambda^{3}_{2}=X_{11}, \,-4\lambda_{0}^{1}+3\lambda^{2}_{1}+\lambda_{2}^{2}-\lambda_{1}^{3}=2X_{12},0=2X_{23}, \notag \\ & -2\lambda^{1}_{0}+\lambda^{2}_{1}+\lambda_{2}^{2}-\lambda_{2}^{3}=2X_{13},\,
				\lambda^{1}_{0}=X_{22},\,
				\lambda^{1}_{0}=X_{33}, \notag \\
				&\left(\begin{array}{ccc} \lambda_{0}^{2} & 0 & \lambda_{1}^{2} \\  0 & \lambda_{0}^{2} & \lambda_{2}^{2} \\ \lambda_{1}^{2} & \lambda_{2}^{2} & \lambda_{0}^{2} \\ \end{array}\right)\succeq0, \notag\\
				%& \left(\begin{array}{cccc} \lambda_{0}^{3}-\lambda_{1}^{3} & 0 & 0 & 0  \\ 0 & \lambda_{0}^{3}+\lambda_{1}^{3} & 0 & 0  \\ 0 & 0 & \lambda_{0}^{3}-\lambda_{2}^{3} & 0  \\  0 & 0 & 0 & \lambda_{0}^{3}+\lambda_{2}^{3} \\ \end{array}\right)\succeq0,\\
	&{\rm diag}\left(\lambda_{0}^{3}-\lambda_{1}^{3},\  \lambda_{0}^{3}+\lambda_{1}^{3}, \  \lambda_{0}^{3}-\lambda_{2}^{3}, \ \lambda_{0}^{3}+\lambda_{2}^{3}\right)\succeq0,\notag\\
				&\lambda_{0}^{2}=1, \ \lambda_{0}^{i}\geq0,\ i=1,3,\ \lambda_{j}^{2}\in\mathbb{R}, \  \lambda_{j}^{3}\in\mathbb{R}, \  j=1,2. \notag
			\end{align}
%Let $q_{11}=3; q_{12}=1; q_{21}=1$ and $q_{22}=4,$ solving problem \eqref{SDD1} using CVX  in MATLAB give us its optimal value $\overline{\lambda}_{0}^{3}=1.9961.$
%It follows from Theorem 3.3 that $\overline{\gamma}:=\overline{\lambda}_{0}^{3}=1.9961$ is the optimal value of problem \eqref{FP1}.
Then, the Lagrangian dual problem of the problem~\eqref{SDD1} is stated as follows:
\begin{align}\label{SDP1}
\inf\limits_{\substack{y\in\mathbb{R}^{6}\\Z_{i}\succeq0 }} \ \ & \sum_{\alpha\in\mathbb{N}^{n}_{2d}} \langle A_{0}^{2},Z_{2}\rangle\tag{${\rm Q}_{1}$}\\
				{\rm s.t.}\quad \ \, &1-3y_{00}+\langle A_{0}^{3},Z_{3}\rangle\leq0,\notag\\
				&y_{20}+y_{02}-4y_{10}-2y_{01}+4y_{00}\leq0, \notag\\
				&3y_{10}+y_{01}+\langle A_{1}^{2},Z_{2}\rangle=0, y_{10}+y_{01}+\langle A_{2}^{2},Z_{2}\rangle=0\notag \\
				&y_{00}-y_{10}+\langle A_{1}^{3},Z_{3}\rangle=0, \ y_{00}-y_{01}+\langle A_{2}^{3},Z_{3}\rangle=0, \ \notag\\
				& \sum_{\alpha}y_{\alpha} \mathcal{B}_{\alpha}\succeq0.\notag
			\end{align}
Solving the problem \eqref{SDP1} using CVX in MATLAB, we obtain the optimal value $1.4907\approx\sqrt{20}/3$ and an optimal solution $(\overline y,\overline Z_{2},\overline Z_{3})$ of the problem~\eqref{SDP1}, where
			\begin{align*}
				\overline{y} \ &= ( 1/3, 1/3, 1/3, 1/3, 1/3,1/3 )^{T} ,\\
			\overline{Z}_{2} \ & \approx\left(
			\begin{array}{rrr}
				 0.5963 & 0.2981 & -0.6667 \\
				0.2981 & 0.1491 & -0.3333 \\
				-0.6667 & -0.3333 & 0.7454 \\
			\end{array}
			\right), \\
		  \overline{Z}_{3} \ & \approx{\rm diag}(0.0000, 0.0000, 0.0000, 0.0000).
			\end{align*}
So, Theorem~\ref{thm2} implies that $\overline{x}=(1 ,1)$ is a solution of the problem $\eqref{FP1}.$  

In what follows, we will verify that the obtained point $\overline{x}=(1, 1)$ is an optimal solution of the problem~\eqref{FP1}. 
To this end, we consider the following non-fractional optimization problem,
\begin{align}\label{NFP1}
\min\limits_{x\in\mathbb{R}^{2}}\left\{ \left\| Q^\frac{1}{2}x  \right \|_2 +\gamma\sup_{( a_1,a_2 )\in\Gamma} \left\{-a_1x_1-a_2x_2+a_1+a_2-3\right\} \colon f_{1}(x)\leq0 \right\}, \tag{${{\rm NFP}_{2}}$}
\end{align}
where $\gamma\geq0$ is a parameter.
%Note that, by simple calculations, we see that 
As we have seen that 
\begin{align*}
\sup_{( a_1,a_2 )\in\Gamma} \left\{-a_1x_1-a_2x_2+a_1+a_2-3\right\}=|x_1-1|+|x_2-1|-3.
\end{align*}
Note that, by Dinkelbach lemma (see Section~\ref{Sec:1}), $\overline x=(1,1)$ is an optimal solution of the problem \eqref{FP1} if and only if $(1,1)$ is an optimal solution of the problem~\eqref{NFP1} and its optimal value is equal to zero with $\gamma = \frac{\sqrt{20}}{3}$ (see also \cite{Dinkelbach1967}).
By putting $\overline x=(1,1)$ into the problem~\eqref{NFP1}, we know its optimal value is 
\begin{align*}
\left\| Q^\frac{1}{2}\overline{x}  \right \|_2 +\frac{\sqrt{20}}{3}\sup_{( a_1,a_2 )\in\Gamma} \left\{-a_1\overline{x}_1-a_2\overline{x}_2+a_1+a_2-3\right\}=0.
\end{align*}
Remember that the problem~\eqref{NFP1} is a convex one, to verify that $\overline x=(1, 1)$ is an optimal solution of the problem~\eqref{NFP1}, we can apply it to Karush--Kuhn--Tucker (KKT) optimality conditions for convex optimization problems. For $\overline x=(1,1),$ the KKT system is as follows,
\begin{equation}\label{ex2_aaa}
\left\{
  \begin{array}{lll}
    {\rm{\textbf{0}}}&\in&\partial f_2(\overline x)+\gamma\partial f_3(\overline x)+\lambda_1\partial f_1(\overline x)\\
    0&=&\lambda_1f_1(\overline x_1,\overline x_2),\ \lambda_1\geq0,
  \end{array}
\right.
\end{equation}
equivalently,
\begin{align}\label{ex2_rel1}
\left\{
  \begin{array}{lll}
    &-\frac{9}{10}\in[-1,1],\\
    &\frac{11\sqrt{20}}{60}\leq\lambda_1\leq \frac{31\sqrt{20}}{60},
  \end{array}
\right.
\end{align}
which implies that $\lambda_1>0$ does exist, and 
let $\overline\lambda_1 > 0$ be any selected such that \eqref{ex2_rel1} holds.
Then, $(\overline x, \overline\lambda_1)$ satisfies the KKT system~\eqref{ex2_aaa}, and so, $\overline x=(1,1)$ is an optimal solution of the problem~\eqref{NFP1}.
Consequently, the point $\overline x=(1,1)$ as we obtained is an optimal solution of the problem~\eqref{FP1}.
\qed
}\end{example}

\section{Conclusions}\label{sect:4}
In this paper, we studied a class of nonsmooth fractional programs with SOS-convex semi-algebraic functions.
We significantly improved the work \cite{Jiao2019b} from the aspects of both the model and the solving methods.
In other words, different to the well-known Dinkelbach method, we propose a parameter-free approach for solving such a class of fractional programs due to its own special constructions.
Remarkably, this parameter-free approach allows us to extract an optimal solution and the optimal value by solving one and only one associated SDP problem.

\subsection*{Acknowledgements}
This work was supported by the Education Department of Jilin Province (no. JJKH20241405KJ), and by the Fundamental Research Funds for the Central Universities.
Jae Hyoung Lee was supported by the National Research Foundation of Korea (NRF)
grant funded by the Korea government (MSIT) (NRF-2021R1C1C2004488).

\small
%\bibliographystyle{abbrv}
%\bibliography{mcsipp}

\end{document}